\providecommand\lang\mathcal
\providecommand\theory\mathsf 
\providecommand\texttheory\textsf
\providecommand\pred\mathrm 
\providecommand\textpred\texttt
\providecommand\func\mathit 
\providecommand\struc\mathfrak
\providecommand\set\mathrm
\renewcommand\b\boldsymbol
\providecommand\L{\mathcal L}
\providecommand\PA{\theory{PA}}
\providecommand\ID{\theory{I\Delta_0}}
\providecommand\IDexp{\ID+\mathsf{exp}}
\providecommand\CT{\theory{CT}}
\providecommand\Sent{\pred{Sent}}
\providecommand\Term{\pred{Term}}
\providecommand\Form{\pred{Form}}
\providecommand\Ax{\pred{Ax}}
\providecommand\Bew{\pred{Bew}}
\providecommand\Var{\mathop\mathrm{Var}}%
\newcommand*\subdot[1]{\oalign{$#1$\cr\hfil.\hfil}}%
\providecommand\gn[1]{\ulcorner #1 \urcorner} %
\providecommand\dvee%
\providecommand\dwedge%
\providecommand\drightarrow%
\providecommand\dnot%
\providecommand\dbot%
\providecommand\dforall%
\providecommand\dexists%
\providecommand\sub{\func{sub}}
\providecommand\subn{\func{subn}}
\providecommand\Seq\Rightarrow
\renewcommand\L{\lang L}
\renewcommand\phi\varphi
\providecommand\size[1]{\lvert #1 \rvert}
\providecommand\pair[1]{\langle#1\rangle}
\providecommand\concat{{^\frown}}
\providecommand\T{\pred T}
\renewcommand\P{\pred P}
\providecommand\lh[1]{lh(#1)}%
\let\oldlabel\label
\def\mylabel#1{\in@{tab:}{#1} \ifin@ (#1)\oldlabel{#1} \else\marginpar{\tiny(#1)}\oldlabel{#1} \fi}
\let\oldmarginpar\marginpar
\def\mysidenote#1{\oldmarginpar{\tiny #1}}
\newcommand\sidenote[1]{}
\def\IsDraft{\let\label\mylabel\let\sidenote\mysidenote}%
\renewcommand\pred\mathsf%
\renewcommand\T{\mathrm T}%
\renewcommand\c[1]{\mathrel{\check#1}}%
\newtheorem{theorem}{Theorem}
\newtheorem{lemma}{Lemma}
\newtheorem{corollary}{Corollary}
\newtheorem{definition}{Definition}
\renewcommand\d{\subdot}%
\newcommand\val{\pred {val}}%
\renewcommand\Var{\pred{Var}}%
\newcommand\s\mathsf%
\providecommand\Q{\pred Q}%
\RenewDocumentCommand\CT { s }{\IfBooleanTF{#1}{\theory{CT}^*}{\theory{CT}}}%
\RenewDocumentCommand\c {m}{#1}
\RenewDocumentCommand\theory {m} {\mathsf{#1}}%
\RenewDocumentCommand \P {} {p}
\let\myRightLabel\RightLabel
\DeclareDocumentCommand\RightLabel {m} {\myRightLabel{\small #1}}
\let\myLeftLabel\LeftLabel
\DeclareDocumentCommand\LeftLabel {m} {\myLeftLabel{\small #1}}
\title{Conservativity for theories of compositional truth via cut elimination}
\author{Graham E.\ Leigh}
\begin{document}

\maketitle
\begin{abstract}
  We present a cut elimination argument that witnesses the conservativity of the compositional axioms for truth (without the extended induction axiom) over any theory interpreting a weak subsystem of arithmetic. In doing so we also fix a critical error in Halbach's original presentation. Our methods show that the admission of these axioms determines a hyper-exponential reduction in the size of derivations of truth-free statements.
\end{abstract}
\def\MainFile{\jobname}
\ifx\MainFile\undefined

\maketitle
\begin{abstract}
  We present a cut elimination argument that witnesses the conservativity of the compositional axioms for truth (without the extended induction axiom) over any theory interpreting a weak subsystem of arithmetic. In doing so we also fix a critical error in Halbach's original presentation. Our methods show that the admission of these axioms determines a hyper-exponential reduction in the size of derivations of truth-free statements.
\end{abstract}
\def\MainFile{\jobname}

   \stop
\fi
\section{Overview}\label{sec:Overview}
We denote by $\IDexp$ and $\IDexp_1$ the first-order theories extending Robinson's arithmetic by $\Delta_0$-induction and, respectively, axioms expressing the totality of the exponentiation and hyper-exponentiation function.
If $\theory S$ is a recursively axiomatised first-order theory interpreting $\IDexp$ then by $\CT[\theory S]$ we denote the extension of $\theory S$ by a fresh unary predicate $\T$ and the {\em compositional axioms of truth} for $\T$.\footnote{See definition~\ref{defn:CT} for a formal definition of $\CT[\theory S]$.}

In this paper we provide syntactic proofs for the following theorems. 
\begin{theorem}\label{thm:1}\label{thm:CTConserve}
  Let $\theory S$ be an elementary axiomatised theory interpreting $\IDexp$.
  Every theorem of $\CT[\theory S]$ that does not contain the predicate $\T$ is a theorem of $\theory S$. Moreover, this fact is verifiable in $\IDexp_1$.
\end{theorem}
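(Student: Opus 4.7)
The plan is to establish Theorem~\ref{thm:CTConserve} via a Schütte-style partial cut elimination on an infinitary calculus. First I would introduce an infinitary proof system $\theoryinf{CT}[\theory S]$ in the language of $\CT[\theory S]$ in which the universal and existential quantifiers are governed by a standard $\omega$-rule (with every numerical instance as a premise) and in which $\T$ is treated by a semantic rule: $\T(\gn{\phi(\dot n)})$ may be introduced on one side of a sequent exactly when $\phi(n)$ has already been derived on the same side. Under this rule the compositional axioms of $\CT[\theory S]$ become derivable with small cut rank, and the numerically true closed atoms of $\theory S$ together with the axioms of $\theory S$ serve as initial sequents.

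Next I would prove an embedding lemma: every $\CT[\theory S]$-derivation of a sequent $\Gamma$ translates into an $\theoryinf{CT}[\theory S]$-derivation of $\Gamma$ of length bounded by a fixed ordinal below $\omega^2$ and of cut rank bounded (up to an additive constant) by the maximum formula complexity appearing in the original derivation. The key technical lemma is then cut reduction for $\theoryinf{CT}[\theory S]$: cuts on formulas of complexity $n+1$ can be replaced by cuts of complexity $\leq n$ at the cost of an exponential blow-up in derivation length. Iterating cut reduction along the original cut rank yields a cut-free derivation. If the endsequent is $\T$-free, then because $\T$ can enter a cut-free derivation only through the semantic rule, every such occurrence may be purged by replacing each $\T(\gn\psi)$ on a branch by $\psi$ itself, yielding a cut-free derivation in the purely arithmetical infinitary calculus; the inverse embedding then extracts an $\theory S$-proof of $\phi$.

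The bulk of the argument for the moreover clause is to verify that this entire construction can be formalised in $\IDexp_1$. Here one works with codes for $\theoryinf{CT}[\theory S]$-derivations as well-founded trees indexed by natural numbers with numerical ordinal labels, and defines the cut-reduction and truth-purging operators by primitive recursion on these codes. A single cut-reduction step is an elementary operation on codes with exponential length blow-up, so iterating it $r$ times — where $r$ is the initial cut rank of the embedded proof — produces a hyper-exponential blow-up, which is precisely what is provably total in $\IDexp_1$.

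The main obstacle, alluded to by the abstract's reference to an error in Halbach's original argument, will be the correct treatment of the truth rule under cut reduction and under substitution of numerals. Specifically, when a cut is performed on a $\T$-formula $\T(\gn{\phi(\dot n)})$, the two sub-derivations must be combinable with a cut on $\phi(n)$ itself, and this step must commute with the coding of substitution used to define $\gn{\phi(\dot n)}$; keeping the bookkeeping of substitution, formula complexity, and numerical content aligned throughout the iteration, while remaining within the resources of $\IDexp_1$, is where care is needed.
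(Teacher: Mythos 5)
Your argument founders at the embedding lemma, and it does so at exactly the point the paper identifies as the critical error in Halbach's original proof. For your cut reduction to work, the rank assigned to a cut on $\T(\gn\chi)$ must essentially be the logical complexity of the coded sentence $\chi$, since the reduction step replaces that cut by a cut on $\chi$ itself. But in a finitary $\CT[\theory S]$-derivation the $\T$-cut formulas are $\T s$ for arbitrary terms $s$, typically containing free variables; under your embedding the $\omega$-rule instantiates such variables by every numeral, and the sentences coded by the resulting closed terms have unbounded logical complexity. Hence the embedded derivation has cut rank $\omega$, not ``the maximum formula complexity appearing in the original derivation'' (as $\L_\T$-formulas the cut formulas are atomic, so that measure is irrelevant), and your finite iteration of cut reduction never terminates. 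Nothing in your proposal supplies a provable bound on the complexity of the formula a term codes; this is precisely the gap the paper fills with the approximation machinery of §\ref{sec:Approximations} (replacing coded formulas by bounded-depth templates over $\L^+$, culminating in the Bounding Lemma), which turns every $\T$-cut into a bounded cut (Cut$^k_\T$) that the finitary calculus $\CT^*[\theory S]$ can eliminate.

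Even granting a cut elimination for your infinitary calculus, the remaining steps do not go through. A cut-free $\omega$-derivation of an arithmetical sequent has transfinite height and does not ``inverse-embed'' into a finitary $\theory S$-proof, so full conservativity cannot be extracted this way. Moreover, Theorem~\ref{thm:1} concerns an arbitrary elementary axiomatised $\theory S$ interpreting $\IDexp$: its quantifiers need not range over numbers and its language need not name every object by a numeral, so an $\omega$-rule for the quantifiers of $\L$ is not even available, and your ``semantic'' truth rule presupposes a standard valuation of coded syntax that the general setting does not provide. Finally, coding well-founded infinite proof trees and defining cut-reduction and truth-purging operators by recursion on them is beyond $\IDexp_1$; the paper instead stays entirely finitary, reducing $\CT[\theory S]$ to $\CT^*[\theory S]$ and then to $\theory S$ with explicit hyper-exponential bounds, which is what makes the ``moreover'' clause verifiable in $\IDexp_1$.
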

Let $p$ be a fresh unary predicate symbol not present in the language $\L$ of $\theory S$.
An $\L$-formula $\pred D$ is an {\em $\theory S$-schema} if $\theory S\vdash \pred D\gn\sigma \rightarrow \sigma$ for every $\L$-formula $\sigma$ and there exists a finite set $U$ of $\L\cup\{p\}$-formulæ with at most $x$ free such that $\theory S\vdash \pred D x\rightarrow \exists \psi\,\bigvee_{\phi\in U}(x=\gn{\phi[\psi/p]})$.
\begin{theorem}\label{thm:2}\label{thm:CTIndConserve}
    Let $\theory S$ be an elementary $\L$-theory interpreting $\IDexp$. For any $\theory S$-schema $\pred D$, the theory $\CT[\theory S]+\forall x(\pred Dx\rightarrow \T x)$
    is a conservative extension of $\theory S$.
\end{theorem}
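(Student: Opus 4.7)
The plan is to extend the cut-elimination argument for Theorem~\ref{thm:1} to handle the additional axiom $\forall x(\pred Dx\to\T x)$. By the definition of $\theory S$-schema, we can fix a finite set $U=\{\phi_1,\dots,\phi_k\}$ of $\L\cup\{p\}$-formulae such that every sentence falling under $\pred D$ has the form $\phi_i[\psi/p]$ for some $\phi_i\in U$ and some $\L$-formula $\psi$, and moreover each such sentence is already a theorem of $\theory S$. Modulo the compositional axioms of $\CT[\theory S]$, the new axiom is therefore equivalent to the schema $\{\T\gn{\phi_i[\psi/p]}\}$ indexed by $1\le i\le k$ and $\psi\in\L$, so it suffices to show that adding these $\T$-atoms as axioms does not enlarge the $\T$-free consequences.

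My first step would be to embed any derivation of a $\T$-free sentence $\sigma$ from $\CT[\theory S]+\forall x(\pred Dx\to\T x)$ into the infinitary calculus supporting Theorem~\ref{thm:1}, treating each use of the new axiom as an initial sequent of the form $\T\gn{\phi_i[\psi/p]}$. At each such leaf, a finite meta-level induction on the fixed $\phi_i$ unfolds the $\T$-atom via the compositional truth rules into a combination of atoms $\T\gn{\psi(\bar t)}$ following the Boolean and quantifier structure of $\phi_i$. Since $\theory S\vdash\phi_i[\psi/p]$, one splices in an infinitary expansion of an $\theory S$-derivation of this $\L$-theorem in place of the atomic leaves, matching the literal occurrences $\psi(\val(\bar t))$ with the $\T\gn{\psi(\bar t)}$-atoms via a further meta-induction on $\psi$, which is permissible because $\psi$ is fixed at the point the axiom is triggered. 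Once every axiomatic leaf is so replaced, the result is a genuine infinitary $\CT[\theory S]$-derivation, and applying cut elimination as in Theorem~\ref{thm:1} yields a $\T$-free derivation of $\sigma$ in $\theory S$.

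The main obstacle is uniformity in $\psi$: both the compositional unfolding depth and the spliced-in $\theory S$-derivation depend on the syntactic complexity of $\psi$, which varies from leaf to leaf. Managing this is where the hypothesis that $\theory S$ is elementary is essential, since it provides an elementary bound—in $\gn{\psi}$—on the size and cut-rank of an $\theory S$-proof of $\phi_i[\psi/p]$, and the infinitary calculus of Theorem~\ref{thm:1} tolerates leaves whose depth grows elementarily with their Gödel number. With this bound in place, the cut-elimination argument of Theorem~\ref{thm:1} applies to the enriched derivation verbatim, collapses all $\T$-occurrences introduced by both the compositional axioms and the new schema, and delivers the desired $\theory S$-derivation of $\sigma$.
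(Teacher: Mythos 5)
There is a genuine gap, and it is exactly the one this paper is built to avoid. You reduce the axiom $\forall x(\pred Dx\rightarrow \T x)$ to the schema of \emph{standard} instances $\T\gn{\phi_i[\psi/p]}$ and then run a meta-level induction on $\phi_i$ and on $\psi$, asserting that ``$\psi$ is fixed at the point the axiom is triggered.'' It is not: in a derivation the axiom is instantiated at arbitrary terms $s$ (typically open, and even when closed not provably equal to a numeral coding a standard sentence), and the defining property of an $\theory S$-schema only yields, \emph{inside} $\theory S$, the statement $\exists\psi\,\bigvee_{\phi\in U}(x=\gn{\phi[\psi/p]})$. So there is no standard $\psi$ on which to base the compositional unfolding or the spliced-in $\theory S$-proof, and your ``elementary bound in $\gn\psi$'' bounds nothing standard. (Two further slips: the schema condition gives $\theory S\vdash\pred D\gn\sigma\rightarrow\sigma$, not $\theory S\vdash\phi_i[\psi/p]$ outright, so the conditional on $\pred D$ cannot be discarded; and Theorem~\ref{thm:1} is proved by a finitary argument via the Bounding Lemma and $\CT^*[\theory S]$ --- there is no infinitary calculus in the paper, and none that ``tolerates leaves whose depth grows elementarily with their G\"odel number.'') This unbounded/nonstandard complexity of the terms under $\T$ is precisely the defect in Halbach's original argument that the approximation machinery is designed to repair, so it cannot be waved away at the leaves.

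The paper's proof goes the other way around: first the new axiom is packaged as a rule ($\pred D$) and the entire derivation is pushed through the Bounding Lemma, so that $\T$ --- in particular at the $\pred D$-inferences --- is applied only to approximated terms of standardly bounded logical depth. Only after this step does the internal existential over $\psi$ collapse to a finite standard set $U^+$ of instantiations, giving $\theory S$-derivability of $\pred Dx,\d d(x)<\overline{G_2(a,n)}\Seq \{x=\gn \phi\mid \phi\in U^+\}$; then finitely many instances of $\sigma\Seq\T\gn\sigma$ (each derivable in $\CT[\theory S]$, not in $\theory S$) let one derive $\pred Dx\Seq\T(\subdot F_{\gn{\b\omega},\bar k}x)$ and so interpret the bounded derivation back in $\CT[\theory S]$, after which Theorem~\ref{thm:1} finishes the job. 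Note finally that the paper observes that the bounded calculus extended by the $\pred D$-rule does \emph{not} admit the cut elimination procedure, so your concluding step of applying cut elimination ``verbatim'' to the enriched derivation is also unavailable; conservativity has to be routed through the interpretation into $\CT[\theory S]$ rather than through a direct cut-elimination argument.
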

In the case that $\theory S$ is Peano arithmetic, the first part of both theorems is a consequence of the main theorems of \cite{KKL81,Lac81}. 
The proof is model-theoretic, however, establishing that a countable non-standard model of Peano arithmetic contains a full satisfaction class if and only if it is recursively saturated. 
Since every model of the Peano axioms is elementarily extended by a recursively saturated model, proof-theoretic conservativity is obtained.
Halbach~\cite{Hal99} offers a proof-theoretic approach to the first part of theorem~\ref{thm:1}. 
The strategy proceeds as follows. First the theory $\CT[\theory S]$ is reformulated as a finitary sequent calculus with a cut rule and rules of inference corresponding to each of the compositional axioms for truth.
A typical derivation in this calculus will involve cuts on formulæ involving the truth predicate.
The elimination of all cuts is not possible as $\theory S$ is assumed to interpret a modicum of arithmetic. Instead, Halbach outlines a method of partial cut elimination whereby every cut on a formula involving the truth predicate is systematically replaced by a derivation without cuts on formulæ containing $\T$.
As noted in \cite{EV13b} and \cite{Hal12} however, the proof contains a critical error. 
An inspection of the cut elimination argument demonstrates that it does provide a method to eliminate cuts on formulæ of the form $\T s$ provided there is a separate derivation, within say $\theory S$, establishing that the logical complexity of the formula coded by $s$ is bounded by some numeral.

The present paper provides the necessary link between the $\CT[\theory S]$ and its fragment with bounded cuts. This takes the form of the following lemma.
\begin{lemma}[Bounding lemma]\label{lem:BoundingIntro}
    If $\Gamma$ and $\Delta$ are finite sets consisting of only truth-free and atomic formulæ, and the sequent $\Gamma\Seq\Delta$ is derivable in $\CT[\theory S]$, then there exists a derivation of this sequent in which all cuts are either on $\L$-formulæ or bounded.
\end{lemma}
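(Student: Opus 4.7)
I would prove the Bounding Lemma by induction on the height of the given $\CT[\theory S]$-derivation $d$ of $\Gamma \Seq \Delta$, tracing each occurrence of the truth predicate back to its point of introduction and using this to supply the required complexity bound. The starting observation is that, because $\Gamma$ and $\Delta$ consist only of $\T$-free atomic formulæ, every occurrence of a formula $\T(s)$ inside $d$ must eventually be removed before reaching the endsequent; hence each such occurrence is introduced either by a truth axiom or by a compositional rule, and is eliminated either by a cut or by propagation through a logical inference above the cut.

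To make the induction go through, I would strengthen the statement to arbitrary (not necessarily $\T$-free) endsequents: any subderivation can be transformed into one whose cuts on truth formulas $\T(s)$ are all accompanied by a bound, in the sense of a numeral $n$ together with a $\theory S$-derivation showing that the formula coded by $s$ has logical complexity at most $n$. The bound is extracted from the structure of the subderivation. In the compositional rules the bound propagates predictably --- the rule introducing $\T(s \dwedge t)$ adds one to the bounds for $\T(s)$ and $\T(t)$ --- while logical rules on $\L$-formulas leave them unchanged, and truth axioms $\T(\gn\phi)$ for explicit $\phi$ start the bound at $\size{\phi}$.

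The critical case is a cut on a truth formula $\T(s)$, with premises $\Gamma' \Seq \Delta', \T(s)$ and $\T(s), \Gamma' \Seq \Delta'$. Here the induction hypothesis supplies bounds for the two displayed occurrences, and these combine to a single bound on the cut formula itself. Working inside $\theory S$, which interprets $\IDexp$, the necessary arithmetic reasoning about formula codes --- equality of terms for codes, decomposition of compound codes, evaluation of the complexity function --- is available. When $s$ is not transparently a code, for instance a variable or a term obtained by substitution, the trace back through the derivation must rely on these arithmetic facts to furnish the bounding $\theory S$-derivation.

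The main obstacle I anticipate is the bookkeeping in the cut case: ensuring that bounds coming from the two premises are uniformly combinable, and that the resulting $\theory S$-derivation of the combined bound is itself constructible within $\IDexp_1$ (as required by the verifiability clause of theorem~\ref{thm:1}). The quantifier rules that introduce generic terms into truth predicates will need particular care, since a universally introduced $\T(s)$ whose $s$ contains a free variable must be specialised before any cut on it can be bounded; the permutation of such introductions past subsequent inferences is the technical heart of the transformation. Once these administrative matters are handled, the remainder should be a routine case analysis on the last inference of the derivation.
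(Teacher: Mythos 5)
There is a genuine gap here: your plan is, in essence, the strategy of Halbach's original argument, and it founders on exactly the point this paper was written to repair. The crux is your cut case, where you claim the induction hypothesis "supplies bounds for the two displayed occurrences" of $\T s$ which "combine to a single bound on the cut formula itself." No such bound is available in general. The occurrences of the truth predicate in a $\CT[\theory S]$-derivation are not introduced from explicit codes $\gn\phi$ of standard formulæ: the axioms have the form $\Gamma,r\c= s,\T r\Seq\Delta,\T s$ for \emph{arbitrary} terms $r,s$ (including free variables), and the rule ($\forall_\T$L) instantiates with arbitrary terms, so tracing an occurrence of $\T s$ upward through the derivation never terminates in anything from which a numeral $n$ with $\theory S\vdash \d d(s)\le\bar n$ could be extracted. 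A cut formula $\T s$ may have $s$ a variable, or a term whose value in a nonstandard model codes a formula of nonstandard complexity; your strengthened induction statement (every subderivation can be transformed so that each $\T$-cut carries a provable numeral bound on the complexity of the coded cut formula) is therefore false as stated. Moreover, even where local bounds exist, the paper's "Obstacles" section shows why a rank assignment read off from the given derivation does not survive the reduction steps when several cuts interact: reducing one cut can increase the "intuitive" complexity attached to the remaining cut formulæ, so the bookkeeping you defer to the end is not merely administrative but is where the naive approach breaks.

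What is missing is the idea the paper actually uses: do not bound the coded formulæ themselves, but \emph{replace} them. One passes to the expanded language $\L^+$ with fresh predicate symbols and replaces each term under $\T$ by its $n$-th approximation $F_{\b\phi,n}$, a formula of which the original term is (provably in $\IDexp$) a substitution instance and whose logical depth is bounded outright by $\lh{\b\phi}\cdot 2^n$. The whole derivation is then transformed, by induction on its truth depth, into a derivation of the \emph{approximated} sequent in $\CT^*[\theory S]$, with the cut-rank bookkeeping carried by explicit recursive functions ($G_1$, $G_2$) and by the formalised lemmas showing approximations commute with the connectives, quantifier substitution, and re-approximation at higher indices. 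Since the endsequent you care about is truth-free, its approximation is itself, which is how the lemma as stated falls out. Without this (or an equivalent device, such as the satisfaction-class constructions of Kotlarski--Krajewski--Lachlan or Enayat--Visser), the induction you propose cannot be pushed through the $\T$-cut case.
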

Let $\CT^*[\theory S]$ denote the subsystem of $\CT[\theory S]$ featuring only bounded cuts.
Since this calculus permits the elimination of all cuts containing the truth predicate, the first part of theorem~\ref{thm:1} is a consequence of the above lemma. 
Moreover, the proof (see §\ref{sec:main}) yields bounds on the size of the resulting derivation, from which the second part of theorem~\ref{thm:1} can be deduced. 

A particular instance of theorem~\ref{thm:2} of interest is if $\pred D$ is the predicate $\Ax_\theory S$ formalising the property of encoding an axiom of $\theory S$.
In this case we notice that the reduction of $\CT[\theory S]$ to $\CT^*[\theory S]$ also yields a reduction of $\CT[\theory S]+\forall x(\Ax_\theory Sx\rightarrow \T x)$ to a corresponding extension of $\CT*[\theory S]$. 
Unlike before, the latter theory does not admit cut elimination. 
Instead we show that the extension of $\CT*[\theory S]$ is relatively interpretable in $\CT[\theory S]$,
whence theorem~\ref{thm:1} provides the desired result.

Theorem~\ref{thm:1} has been independently proved by Enayat and Visser in \cite{EV13b} (the special case in which $\theory S$ is Peano arithmetic is also outlined in \cite{EV13a}). Their proof involves a refinement and extension of the original model-theoretic proof appearing in \cite{KKL81} that permits the argument to be formalised within a weak fragment of arithmetic.
The author understands that Enayat and Visser also have a proof of theorem~\ref{thm:2}, again model-theoretic, though at the time of writing this is not in circulation.

\subsection{Outline}\label{sec:outline}
In the following two sections we formally define the theory $\CT[\theory S]$ for a theory $\theory S$ interpreting $\IDexp$ and its presentation as a sequent calculus, as well as the sub-theory with bounded cuts, $\CT^*[\theory S]$.
Section~\ref{sec:Approximations} contains the technical lemmata necessary to prove the core theorems, that every theorem of $\CT[\theory S]$ not involving the predicate $\T$ is derivable in $\CT^*[\theory S]$; the proofs of which form the content of section~\ref{sec:main}.
In the final section we present applications of our analysis to questions relating to interpretability and speed-up.

\section{Preliminaries}\label{sec:prelim}
We are interested in first-order theories that possess the mathematical resources to develop their own meta-theory. It is well-known that only a weak fragment of arithmetic is required for this task, namely $\ID+\exp$. 
For our purposes we therefore take the interpretability of $\ID+\exp$ as representing that a theory possess the resources to express basic properties about its own syntax. 
For notational convenience we shall restrict ourselves exclusively to theories that extend this base theory. 
Our results, however, apply just as well to the general case.

Let $\L$ be a recursive, first-order language containing the language of arithmetic. 
It will be useful to work with an extension of $\L$ that includes a countable list of fresh predicate symbols $\{\P^i_j\mid i,j<\omega\}$ where $\P^i_j$ has arity $i$, plus a fresh propositional constant $\epsilon$. 
We denote this extended language by $\L^+$.
We fix some standard representation of $\L^+$ in $\IDexp$, which takes the form of a fixed simple Gödel coding of $\L^+$ into $\L$ with:
\begin{enumerate}
  \item 
    Predicates $\Term_\L x$, $\Form_\L x$, $\Sent_\L x$, and $\Var x$ of $\L$ expressing respectively the relations that $x$ is the code of a closed term, a formula, a sentence and a variable symbol of $\L^+$.
  \item 
    A $\Sigma_1$-predicate $\val(x,y)$ such that $\val(\gn t,t)$ is provable in the base theory for every term $t$. 
    We view $\val$ as defining a function
    and write $\pred{eq}(r,s)$ in place of $\forall x\forall y(\val(r,x)\wedge\val(s,y)\rightarrow x\c=y)$.
  \item 
    Predicates defining operations on codes; namely the binary terms $\d=$, $\dwedge$, $\dvee$, $\drightarrow$, $\dforall$, $\dexists$, $\d\P$, unary terms $\d \Q$ for each relation $\Q$ in $\L$ and $\d d$, and a ternary term $\sub$ with:
  \begin{itemize}
    \item $\d \Q(\pair{\gn{t_1},\ldots,\gn{t_n}})=\gn{\Q(t_1,\ldots,t_n)}$ for each $\Q\in\L$,
    \item $\d \P(\bar\jmath,\pair{\gn{t_1},\ldots,\gn{t_i}})=\gn{\P^i_j(t_1,\ldots,t_i)}$, 
    \item $\d d(\gn \alpha)=x$ if the logical complexity of the $\L^+$ formula $\alpha$ is $x$, and
    \item 
        $\sub (x,y,z)$ denoting the usual substitution function that replaces in the term or formula (encoded by) $x$ each occurrence of the variable with code $y$ by the term with code $z$.
        We abbreviate uses of this function by writing $x[z/y]$ in place of $\sub(x,y,z)$.
  \end{itemize}
\end{enumerate}

\begin{definition}\label{defn:CT}
  Let $\theory S$ be some fixed theory in a recursive language $\L$ which interprets $\IDexp$. 
  The theory $\CT[\theory S]$ is formulated in the language $\L_\T=\L\cup\{\T\}$ and consists of the axioms of $\theory S$ together with
  \begin{align*}
    \Term_\L x\wedge\Term_\L y&\rightarrow( \T x\d=y\leftrightarrow x^\circ\c=y^\circ)),\\
    \Sent_\L x\wedge\Sent_\L y&\rightarrow(\T(x\dwedge y)\leftrightarrow \T x\wedge \T y),\\
    \Sent_\L x\wedge\Sent_\L y&\rightarrow(\T(x\dvee y)\leftrightarrow \T x\vee\T y),\\
    \Sent_\L x\wedge\Sent_\L y&\rightarrow(\T(x\drightarrow y)\leftrightarrow (\T x\rightarrow\T y)),\\
    \Sent_\L x&\rightarrow(\T\dnot x\leftrightarrow \lnot\T x),\\
    \Var y\wedge \Sent_\L x(\dot {\bar0}/y)&\rightarrow (\T\dforall yx\leftrightarrow\forall z(\Term_\L z\rightarrow\T x(z/y))),\\
    \Var y\wedge \Sent_\L x(\dot {\bar0}/y)&\rightarrow (\T\dexists yx\leftrightarrow\exists z(\Term_\L z\wedge\T x(z/y))),\\
    \Term_\L x_1\wedge\cdots\wedge\Term_\L x_n&\rightarrow( \T(\d \Q\pair{x_1,\ldots,x_n})\leftrightarrow \Q(\val x_1,\ldots,\val x_n)).
  \end{align*}
  for each relation $\Q$ of $\L$ (with arity $n$).
  We call the formulæ above the {\em compositional axioms for $\L$} and any formula in the language of $\L$ {\em arithmetical}.
  Moreover explicit mention of the base theory $\theory S$ is often omitted  and we write $\CT$ and $\CT^*$ in place of $\CT[\theory S]$ and $\CT^*[\theory S]$ respectively.
\end{definition}

Finally, we fix a few notational conventions for the remainder of the paper.
The start of the Greek lower-case alphabet, $\alpha$, $\beta$, $\gamma$, etc., will be used to represent formulæ of $\L_\T=\L\cup\{\T\}$, while the end, $\phi$, $\chi$, $\psi$, $\omega$, as well as Roman lower-case symbols $r$, $s$, etc.~denote terms in $\L$.\footnote{The former list will be used exclusively as meta-variables ranging over terms encoding formulæ of $\L^+$.}
Upper-case Greek letters $\Gamma$, $\Delta$, $\Sigma$ etc., are for finite sets of $\L_\T$ formulæ and boldface lower-case Greek symbols $\b\phi$, $\b\psi$, etc.~represent finite sequences of $\L$ terms. 
For a sequence $\b\phi=(\phi_0,\dots,\phi_k)$, $\T\b\phi$ denotes the set $\{\T\phi_i\mid i\le k\}$. 
As usual, $\Gamma,\alpha$ is shorthand for $\Gamma\cup\{\alpha\}$ and $\Gamma,\Delta$ for $\Gamma\cup\Delta$.

\section{Two sequent calculi for compositional truth}\label{sec:seqcal}
Let $\theory S$ be a fixed theory extending $\ID+\exp$ formulated in the language $\L$.
We present sequent calculi for $\CT[\theory S]$ and $\CT^*[\theory S]$.
In the former calculus, derivations are finite and the calculus supports the elimination of all cuts on non-atomic formulæ containing the truth predicate. 
The latter system replaces the cut rule of $\CT[\theory S]$ by two restricted variants: one of these is the ordinary cut rule applicable to only formulæ not containing $\T$; the other is a cut rule for the atomic truth predicate which is only applicable if the formula under the truth predicate subject to the cut has, provably, a fixed finite logical complexity.
This second variant turns out to be admissible, so any sequent derivable in $\CT^*[\theory S]$ has a derivation containing only arithmetical cuts.
It follows therefore, that $\CT^*[\theory S]$ is a conservative extension of $\theory S$.
We show that any $\CT[\theory S]$ derivation can be transformed into a derivation in $\CT^*[\theory S]$ and hence obtain the conservativity of $\CT[\theory S]$ over $\theory S$.

We now list the axioms and rules of $\CT[\theory S]$ and $\CT^*[\theory S]$.
\subsection{Axioms}
\begin{enumerate}
  \item $\Gamma \Seq \Delta, \phi$ if $\phi$ is an axiom of $\theory S$,
  \item $\Gamma,r\c= s,\T r\Seq \Delta,\T s$ for all terms $r$ and $s$,
  \item $\Gamma,\T r\Seq\Sent (r),\Delta$ for every $r$.
\end{enumerate}
\subsection{Arithmetical rules}
\begin{center}
 \begin{tabular}{cc}
    \AxiomC%
        {$\Gamma\Seq\Delta, \alpha$}%
            \RightLabel{($\forall$R)}
    \UnaryInfC%
        {$\Gamma\Seq\Delta,\forall v_i\alpha$}
    \DisplayProof
    &
    \AxiomC%
        {$\Gamma,\alpha(s/v_i)\Seq\Delta$}%
            \RightLabel{($\forall$L)}
    \UnaryInfC%
        {$\Gamma,\forall v_i\alpha\Seq\Delta$}
    \DisplayProof
    \\\\
    \AxiomC%
        {$\Gamma\Seq\Delta,\alpha, \beta$}%
            \RightLabel{($\vee$R)}
    \UnaryInfC%
        {$\Gamma \Seq\Delta, \alpha\vee \beta$} 
    \DisplayProof
    &
    \AxiomC%
        {$\Gamma, \alpha \Seq\Delta$}%
            \AxiomC%
                {$\Gamma, \beta \Seq\Delta$}
                    \RightLabel{($\vee$L)}
    \BinaryInfC%
        {$\Gamma , \alpha\vee \beta \Seq\Delta$} 
    \DisplayProof
    \\\\
    \AxiomC%
        {$\Gamma, \alpha \Seq\Delta$}%
            \RightLabel{($\lnot$R)}
    \UnaryInfC%
        {$\Gamma \Seq\Delta, \lnot \alpha$}
    \DisplayProof
    &
    \AxiomC%
        {$\Gamma\Seq\Delta, \alpha$}%
            \RightLabel{($\lnot$L)}
    \UnaryInfC%
        {$\Gamma, \lnot \alpha \Seq\Delta$}
    \DisplayProof
  \end{tabular}
\end{center}
\begin{gather*}
    \AxiomC%
        {$\Gamma,\alpha\Seq\Delta$}
        \AxiomC%
        {$\Gamma\Seq\Delta,\alpha$}%
            \LeftLabel{(Cut$_\L$)}
            \RightLabel{provided $\alpha\in\L$}
    \BinaryInfC%
      {$\Gamma\Seq\Delta$}
    \DisplayProof
\end{gather*}
We write $\Gamma \Seq^* \Delta$ to express that the derivation of $\Gamma \Seq \Delta$ involves only the axioms and arithmetical rules.
\subsection{Truth rules}
\begin{center}
 \begin{tabular}{cc}
    \AxiomC%
        {$\Gamma\Seq\Delta,\T\psi_0,\T\psi_1$}%
            \RightLabel{($\vee_\T$R)}
    \UnaryInfC%
        {$\Gamma,\psi\c=\psi_0\dvee\psi_1\Seq\Delta,\T\psi$} 
    \DisplayProof
    &
    \AxiomC%
        {$\Gamma,\T\psi_0\Seq\Delta$}%
            \AxiomC%
                {$\Gamma,\T\psi_1\Seq\Delta$}
                    \RightLabel{($\vee_\T$L)}
    \BinaryInfC%
        {$\Gamma,\psi\c=\psi_0\dvee\psi_1,\T\psi\Seq\Delta$} 
    \DisplayProof
    \\\\
    \AxiomC%
        {$\Gamma\Seq\Delta,\T(\psi_0[v_i/s])$}%
            \RightLabel{($\forall_\T$R)}
    \UnaryInfC%
        {$\Gamma,\psi\c=\dforall s\psi_0\Seq\Delta,\T\psi$}
    \DisplayProof
    &
    \AxiomC%
        {$\Gamma,\T(\psi_0[t/s])\Seq\Delta$}%
            \RightLabel{($\forall_\T$L)}
    \UnaryInfC%
        {$\Gamma,\psi\c=\dforall s\psi_0,\T\psi\Seq\Delta$}
    \DisplayProof
    \\\\
    \AxiomC%
        {$\Gamma,\T\psi_0\Seq\Delta$}%
            \RightLabel{($\lnot_\T$R)}
    \UnaryInfC%
        {$\Gamma,\Sent\psi,\psi\c=\dnot\psi_0\Seq\Delta,\T\psi$}
    \DisplayProof
    &
    \AxiomC%
        {$\Gamma\Seq\Delta,\T\psi_0$}%
            \RightLabel{($\lnot_\T$L)}
    \UnaryInfC%
        {$\Gamma,\Sent\psi,\psi\c=\dnot\psi_0,\T\psi\Seq\Delta$}
    \DisplayProof
    \\\\
    \AxiomC%
        {$\Gamma\Seq\Delta, \pred{eq}(r,s) $}%
            \RightLabel{($=_\T$R)}
    \UnaryInfC%
        {$\Gamma,\phi\c=(r\subdot=s)\Seq\Delta,\T \phi$}
    \DisplayProof   
    & 
    \AxiomC%
        {$\Gamma, \pred{eq}(r,s) \Seq\Delta$}%
            \RightLabel{($=_\T$L)}
    \UnaryInfC%
        {$\Gamma, \phi\c=(r\subdot=s),\T \phi\Seq\Delta$}
    \DisplayProof   
  \end{tabular}
\end{center}
\subsection{Additional cut rules}
In $\CT[\theory S]$:
\begin{align*}
  \AxiomC%
    {$\Gamma,\T \phi \Seq\Delta$}
    \AxiomC%
        {$\Gamma\Seq\Delta,\T \phi$}%
        \LeftLabel{(Cut$_\T$)}
  \BinaryInfC%
      {$\Gamma\Seq\Delta$}
  \DisplayProof
\end{align*}
In $\CT^*[\theory S]$:
\begin{gather*}
    \AxiomC%
        {$\Gamma,\T\phi\Seq\Delta$}
            \AxiomC%
                {$\Gamma\Seq\Delta,\T\phi$}%
                \AxiomC%
                    {$\Gamma,\Sent\phi\Seq^*\subdot d(\phi)\le\bar k$}
                    \LeftLabel{(Cut$^k_\T$)}
    \TrinaryInfC%
        {$\Gamma\Seq\Delta$}
    \DisplayProof
\end{gather*}
Normal eigenvariable conditions apply to four quantifier rules. We refer to the two rules (Cut$_\T$) and (Cut$^m_\T$) collectively as {\em$\mathit T$-cuts}.

\subsection{Derivations}
Derivations in either $\CT[\theory S]$ or $\CT^*[\theory S]$ are defined in the ordinary manner;
the {\em truth depth} of a derivation is the maximum number of truth rules occurring in a path through the derivation.
The {\em truth rank} is the least $r$ such that for any rule (Cut$^m_\T$) occurring in the derivation, $m< r$. The {\em rank} of a derivation is any pair of numbers $(a,r)$ such that $a$ bounds the truth depth and $r$ the truth rank of the derivation.

\subsection{Meta-theorems for \texorpdfstring{$\CT[\theory S]$}{{\sf CT[S]}}}
The key fact we require from $\ID+\exp$ is that the theory suffices to show that codes for $\L^+$ formulæ are uniquely decomposable.
\begin{lemma}[Unique readability lemma]\label{lem:uniquereadability}
The sequent $\Gamma\Seq\Delta$ is derivable in $\ID+\exp$ whenever one of the following conditions hold.
 \begin{enumerate}
  \item $\Gamma$ is a doubleton subset of $\{x\c=y_0\dvee z_0,x\c=\dforall y_1z_1,x\c=(y_2\d=z_2),x\c=\dnot y_3\}$.
  \item $\{\Sent_\L(y),\Sent_\L(z)\}\subset\Gamma$, $\Gamma\cap\{x\c=y\dvee z,x\c=z\dvee y,x\c=\dforall zy,x\c=\dnot y\}\neq \emptyset$ and $\{\Sent_\L(x)\}\subseteq \Delta$.
  \item $\{y_0=y_1\wedge z_0=z_1\} \subseteq \Delta$ and $\Gamma$ extends:
  \begin{enumerate}
   \item $\{x\c=y_0\dvee z_0,x\c=y_1\dvee z_1\}$;
   \item $\{x\c=\dforall y_0z_0,x\c=\dforall y_1z_1\}$; or
   \item $\{x\c=(y_0\d=z_0),x\c=(y_1\d=z_1)\}$.
  \end{enumerate}
  \item $\{y_0 = y_1\} \subseteq \Delta$ and $\{x\c=\dnot y_0,x\c=\dnot y_1\} \subseteq \Gamma$.
  \item $\emptyset \neq \Gamma \subseteq \{x\c=y\dvee z,x\c=z\dvee y,x\c=\dforall zy,x\c=\dnot y\}$ and $\{\d d(y)<\d d(x)\} \subseteq \Delta$.
  \item $\emptyset \neq \Gamma \subseteq \{x\c=y\dvee z,x\c=z\dvee y,x\c=\dforall zy,x\c=\dnot y,x\c=(y\d=z),x\c=(z\d=y)\}$ and $\{y<x\} \subseteq \Delta$.
  \item $\{\d d(x)\le x\}\subseteq \Delta$.
 \end{enumerate}
\end{lemma}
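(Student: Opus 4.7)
The plan is to reduce each clause to a basic property of the fixed Gödel coding of $\L^+$, every one of which is provable in $\ID+\exp$. Recall that the paper fixes a simple coding under which each constructor $\dvee$, $\dwedge$, $\dforall$, $\dexists$, $\dnot$, $\d=$ is represented by an $\ID+\exp$-definable term corresponding to some pairing $\langle \mathrm{tag},\ldots\rangle$. The cornerstone of the argument will be that in $\ID+\exp$ one can define projection functions and a tag-extracting function that provably invert each constructor, and moreover every strict component of a pair is numerically smaller than the pair itself. Granted these, all seven clauses follow by routine case analysis.

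First I would dispatch (1) and (2). For (1), the four constructors $\dvee$, $\dforall$, $\d=$, $\dnot$ use pairwise distinct tags, so any two hypotheses from the given set determine two different values of the tag of $x$, and hence derive the empty consequent. For (2), the $\Delta_0$-definition of $\Sent_\L$ demands only that the immediate subformulae are sentences (together with an appropriate bound-variable condition in the quantifier case); so once the hypotheses of the clause are in $\Gamma$, $\Sent_\L(x)$ unfolds to these immediate conditions. Clauses (3) and (4) are injectivity properties of the underlying pairing: from $x = \mathrm{tag}(y_0,z_0) = \mathrm{tag}(y_1,z_1)$, applying the projections yields $y_0 = y_1$ and $z_0 = z_1$, with the negation case (4) being the same argument using only one projection.

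Clauses (5)--(7) are bounding statements. For (6), any strict component of the pair encoding $x$ is numerically less than $x$, so if $x\c= y\dvee z$ then $y<x$ follows; the same observation handles each other constructor uniformly. Clause (5) is proved by noting that the depth function $\d d$ is defined so that $\d d(y\dvee z) = \max(\d d(y),\d d(z))+1$ and analogously for the other constructors, hence $\d d(y)<\d d(x)$ strictly on the listed premises. Finally, clause (7), $\d d(x)\le x$, is proved by $\Delta_0$-induction on $x$, using that at each constructor step $\d d$ grows by one while $x$ itself at least doubles.

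The main obstacle is not any single technical step but the bookkeeping: each clause presupposes a particular collection of properties of the coding fixed in \S\ref{sec:prelim}, and to dispatch them rigorously one must write down explicit defining equations for the projections, the tag-extractor, and the depth function, then verify them by $\Delta_0$-induction within $\ID+\exp$. Since the pairing must be totalised on all of $\N$ and this requires exponentiation, the use of the axiom $\exp$ is essential; the argument would not go through in pure $\ID$. With this infrastructure in place, each of the seven clauses is then established by a short case analysis on the shape of the hypotheses in $\Gamma$.
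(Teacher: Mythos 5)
Your proposal is correct in substance: the paper states this lemma without proof, treating it as a routine consequence of the fixed Gödel coding, and your argument --- distinct constructor tags for clauses (1)--(2), injectivity of the pairing/projections for (3)--(4), components being numerically and depth-wise smaller than the code for (5)--(6), and $\Delta_0$-induction for (7) --- is exactly the routine verification being taken for granted. One caveat: your closing remark that $\exp$ is essential because "the pairing must be totalised" is off the mark --- a total pairing function is already available in $\ID$; exponentiation enters rather for the sequence-coding and formula-manipulation bounds (e.g.\ substitution, length of codes) underlying predicates like $\Sent_\L$ and $\d d$ --- but since the lemma only asserts derivability in $\IDexp$, this side claim does not affect the correctness of the argument.
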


If $\theory S$ does not contain axioms containing the truth predicate then partial cut elimination is at least available in $\CT[\theory S]$.
\begin{lemma}[Embedding lemma for {$\CT$}]\label{thm:1stCutElim}
  Suppose $\T$ does not occur in $\L$ and $\CT[\theory S] \vdash \alpha$. 
  Then the sequent $\emptyset\Seq\alpha$ has a derivation according to the rules of $\CT[\theory S]$.
\end{lemma}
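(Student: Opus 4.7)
The plan is to induct on a Hilbert-style derivation of $\alpha$ in $\CT[\theory S]$, establishing at each step that the corresponding one-sided sequent $\emptyset \Seq \beta$ is derivable in the sequent calculus. Axioms of $\theory S$ come directly from axiom scheme (1), the axioms of classical predicate logic have standard cut-free derivations, and each compositional axiom of $\CT$ is matched against its corresponding truth rule. For the disjunction axiom, for instance, I would apply ($\vee_\T$R) and ($\vee_\T$L) together with the tag equation $(x\dvee y) \c= x\dvee y$; the $\Sent_\L$ side-conditions are discharged by axiom scheme (3), and the converse halves of the biconditionals rely on the unique readability lemma for the necessary case analysis on the syntactic form of the coded formula. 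The negation, implication, quantifier and atomic axioms are treated analogously using the ($\lnot_\T$), ($=_\T$) and quantifier truth rules.

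For the inference rules, generalisation is simulated by ($\forall$R) with an eigenvariable, while modus ponens requires a cut on the premise formula. The subtlety is that the sequent calculus only provides (Cut$_\L$) for arithmetical formulae and (Cut$_\T$) for atomic truth formulae; a cut on a compound $\L_\T$-formula is not a primitive rule. I would handle this through a standard cut-reduction lemma: any cut on a compound $\L_\T$-formula reduces to cuts on its immediate subformulae via inversion on the outermost connective, or by instantiation with the eigenvariable in the quantifier cases. Iterating this procedure bottoms out at cuts on atomic formulae, and under the hypothesis that $\T$ does not occur in $\L$, every atomic $\L_\T$-formula is either in $\L$, handled by (Cut$_\L$), or of the form $\T t$, handled by (Cut$_\T$).

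The chief obstacle is this cut-reduction step: one must verify that inversion on the propositional and quantifier connectives survives in the presence of the truth rules. The key observation is that the truth rules introduce their principal formulae $\T\psi$ together with tag equations such as $\psi \c= \psi_0 \dvee \psi_1$ only as antecedent side formulae, so they do not interact with inversion on the outer connectives of an $\L_\T$ cut formula. The usual cut-reduction argument for classical first-order logic therefore applies with only bookkeeping changes, and the procedure terminates with atomic cuts of the two permitted forms.
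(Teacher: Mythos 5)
The paper states this embedding lemma without proof, and your route---simulating the axioms of $\theory S$ and the compositional axioms by the corresponding truth rules, and handling modus ponens by showing that cuts on compound $\L_\T$-formulae are admissible, reducing along the outermost connective until the cuts bottom out at (Cut$_\L$) and (Cut$_\T$) because $\T$ does not occur in $\L$---is exactly the standard partial cut-elimination argument the surrounding text alludes to, and it is sound. One quibble of phrasing: the reason the reduction survives the truth rules is not that they introduce $\T\psi$ ``only as antecedent side formulae'' (the R-rules introduce it in the succedent), but that every formula they make principal or add to the context ($\T\psi$, the tag equations, $\Sent\psi$) is atomic, so a compound cut formula containing $\T$ can only be principal in the ordinary logical rules---which is the fact your argument actually needs, whereas the appeal to the unique readability lemma is not required for deriving the compositional axioms and only becomes essential in the cut-elimination theorem for $\CT^*[\theory S]$.
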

The next lemma demonstrates the key difference between $\CT$ and $\CT^*$.
\begin{lemma}[Cut elimination theorem]\label{thm:2ndCutElim}
  Suppose $\Gamma\Seq\Delta$ is derivable in $\CT^*$ with cut rank $(a,r+1)$. Then the same sequent is derivable with rank $(3^a,r)$.
\end{lemma}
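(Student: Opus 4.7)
The plan is to adapt the standard Gentzen-style cut-reduction argument to the three-premise truth cut (Cut$^k_\T$) of $\CT^*$. The engine is a \emph{Reduction Lemma}: given derivations $D_1$ of $\Gamma,\T\phi\Seq\Delta$ with rank $(a_1,r)$, $D_2$ of $\Gamma\Seq\Delta,\T\phi$ with rank $(a_2,r)$, and an arithmetical derivation $D_3$ of $\Gamma,\Sent\phi\Seq^*\d d(\phi)\le\bar r$, one produces a derivation of $\Gamma\Seq\Delta$ with rank $(a_1+a_2,r)$. Granted this lemma, the theorem follows by induction on the truth depth $a$ of a given derivation of rank $(a,r+1)$: if the last rule is not (Cut$^r_\T$), recursively reduce its (at most three) premises and reapply the rule, which increases truth depth by at most one and so preserves the bound $3^a$; if the last rule is (Cut$^r_\T$), recursively reduce its three premises to rank $(3^{a-1},r)$ each (the third, being already $\Seq^*$, needs no reduction), then invoke the Reduction Lemma to obtain truth depth at most $2\cdot 3^{a-1}\le 3^a$. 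The base $3$ of the exponent absorbs the worst case in which a three-premise rule sits above a (Cut$^r_\T$).

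The Reduction Lemma itself is proved by induction on $a_1+a_2$, with a case analysis on the final rules of $D_1$ and $D_2$. The \emph{principal case} is when both derivations end with a truth rule introducing $\T\phi$ as principal formula. By Unique Readability (Lemma \ref{lem:uniquereadability}, clauses (1)--(4)), the side conditions such as $\phi\c=\psi_0\dvee\psi_1$ attached to these truth rules arithmetically identify the subformulas of $\phi$. The cut on $\T\phi$ is replaced by cuts on the truth of those subformulas; e.g.\ if both sides end with $(\vee_\T)$ introducing $\T\phi$ with $\phi\c=\psi_0\dvee\psi_1$, the new derivation cuts on $\T\psi_0$ and $\T\psi_1$. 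Clause (5) of Unique Readability gives $\d d(\psi_i)<\d d(\phi)$, so combined with $D_3$ we obtain $\Gamma,\Sent\psi_i\Seq^*\d d(\psi_i)\le\bar m$ for some $m<r$, whence each new cut is an instance of (Cut$^m_\T$) with $m<r$, strictly below the rank being eliminated. The atomic principal case via the $(=_\T)$ rules is dispatched directly using the $\pred{eq}$ predicate and the $\Sent$-axiom, without introducing any fresh truth cut.

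The \emph{non-principal cases} are the standard cut permutations: if $\T\phi$ is not principal in the last rule of, say, $D_1$, the cut is pushed above that rule, yielding one cut per premise, each handled by the inductive hypothesis on a smaller pair of depths; the rule is then reapplied below. The eigenvariable conditions and the handling of the $\T$-axioms (items (2) and (3) of the axioms) are absorbed into routine case distinctions.

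The main obstacle I anticipate is the careful bookkeeping of the auxiliary bounding derivation $D_3$ through each reduction step. Every decomposition of $\T\phi$ into its truth-subformulas requires a fresh $\Seq^*$-derivation of the appropriate bound on $\d d(\psi_i)$, assembled from $D_3$ and the Unique Readability clauses. Verifying that this construction stays within the arithmetical fragment $\Seq^*$ --- that we never inadvertently leak a truth cut into the bound computation --- is the delicate point, though it follows from Unique Readability being proved in $\ID+\exp$ and hence yielding derivations with no truth rules at all.
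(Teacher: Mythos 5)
Your proposal follows essentially the same route as the paper: a reduction lemma for a single truth cut of level $r$, proved by induction on the sum of the heights of the two derivations, with the unique readability lemma both reconciling the (possibly syntactically different) decompositions of the cut term on the two sides and supplying the truth-free bound $\subdot d$ strictly below $r$ that licenses the replacement cuts as instances of (Cut$^m_\T$) with $m<r$, followed by the main induction on truth depth. The only divergence is bookkeeping of the depth bound in the reduction lemma (your additive $a_1+a_2$ versus the paper's $(a+b)\cdot 2$), which does not affect the claimed $3^a$ bound.
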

\begin{proof}
    The argument follows the standard cut elimination procedure that is available for the formulation of $\CT$ in $\omega$-logic
    where the standard measure of complexity for terms encoding $\L$-sentences is available. 
    The simplest approach to achieving cut elimination in that setting is through the use of a ``reduction lemma'' formalisation.
    In the finitary scenario, this corresponds to proving that from derivations of the sequents $\Gamma\Seq\Delta,\T\chi$ and $\Gamma,\T\chi\Seq\Delta$, with ranks $(a,r)$ and $(b,r)$ respectively, and a truth-free derivation of the sequent $\Gamma\Seq \subdot d(\chi)\le \bar r$, a derivation of the sequent $\Gamma\Seq\Delta$ can be obtained with rank $((a+b)\cdot2,r)$.
    
    As usual the proof proceeds via induction on the sum of the heights of the two derivations and we can assume that $\T\chi$ is principal in both derivations.
    If either sequent is an axiom, it takes the form $\Gamma',\chi'\c=\chi,\T\chi'\Seq\Delta,\T\chi$, whence substituting $\chi$ for $\chi'$ in the other sequent we obtain $\Gamma\Seq\Delta$. 
    That leaves only the truth rules to consider.
    We will provide only one of the relevant cases of the proof and leave the remainder as an exercise for the reader.
  
    Suppose the first derivation ends with an application of ($\forall_\T$R). Then $a=a'+1$ and there are terms $s_0$ and $\chi_0$ such that the formula $\chi\c=\dforall s_0 \chi_0$ is a member of $\Gamma$ and the sequent
    \[
    \Gamma\Seq \Delta,\T\chi,\T(\chi_0[v_i/s_0])
    \]
    is derivable with rank $(a',r)$.
    Now if any rule other than ($\forall_\T$L) occurs as the last rule in the derivation of $\Gamma,\T\chi\Seq\Delta$, there are terms $\chi_0'$ and $\chi_1'$ such that either $\{\chi\c=\dforall s_0\chi_0,\chi\c=\chi_0'\dvee\chi_1'\}\subseteq\Gamma$ or $\{\chi\c=\dforall s_0\chi_0,\chi\c=\dnot\chi_0'\}\subseteq\Gamma$, whence $\Gamma\Seq\Delta$ follows by the unique readability lemma. 
    Thus we may assume ($\forall_\T$L) was applied to obtain $\Gamma,\T\chi\Seq\Delta$ and so there are terms $s_1$, $\chi_1$ and $t$ such that $\{\chi\c=\dforall s_0\chi_0,\chi\c=\dforall s_1\chi_1\}\subseteq\Gamma$ and
    \[
    \Gamma,\T\chi,\T\chi_1[t/s_1]\Seq\Delta
    \]
    has a derivation with rank $(b',r)$ for some $b'<b$.
    Then there is some $r'<r$ for which the sequents
    \begin{align*}
        \Gamma\Seq s_0\c=s_1\wedge\chi_0\c=\chi_1 &&
        \Gamma\Seq\subdot d(\chi_0[v_i/s_0])\le\bar r'
    \end{align*}
    are truth-free derivable and so by term substitution we obtain a derivation of
    \begin{gather*}
        \Gamma,\T\chi,\T\chi_0[t/s_0]\Seq\Delta,
    \end{gather*} 
    with rank $(b',r)$. 
    Applying the induction hypothesis yields derivations of
    \begin{align*}
        \Gamma,\T\chi_0[t/s_0]\Seq\Delta &&
        \Gamma\Seq\Delta,\T\chi_0[ v_i/s_0]
    \end{align*}
    with ranks $((a+b')\cdot2,r)$ and $((a'+b)\cdot2,r)$ respectively. 
    Substituting $t$ for $v_i$ in the second derivation and applying (Cut$^{r'}_\T$) yields a derivation of $\Gamma\Seq\Delta$ with rank $((a+b)\cdot2,r)$.
\end{proof}
\begin{corollary}
    If the language of $\theory S$ does not contain $\T$ then 
    $\CT^*[\theory S]$ is a conservative extension of $\theory S$. 
\end{corollary}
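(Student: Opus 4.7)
The plan is to apply the cut elimination theorem iteratively to reduce the truth rank of a $\CT^*[\theory S]$-derivation of $\alpha$ to zero, and then to translate the resulting derivation into one in $\theory S$ via a position-sensitive substitution that replaces every $\T$-atom by a trivialising constant---false on the left, true on the right.

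Specifically, let $\alpha$ be an $\L$-formula with $\CT^*[\theory S]\vdash\alpha$. Iterating Lemma~\ref{thm:2ndCutElim} yields a derivation $\mathcal D$ of $\Seq\alpha$ in $\CT^*[\theory S]$ of truth rank $0$, that is, one containing no instances of (Cut$^k_\T$). Fix closed $\L$-sentences $\top$ and $\bot$ that are respectively provably true and provably false in $\theory S$ (say $\bar 0\c=\bar 0$ and $\bar 0\c=\bar 1$) and define a translation $(\cdot)^\dagger$ on sequents sending $\Gamma\Seq\Delta$ to $\Gamma^\dagger\Seq\Delta^\dagger$, where $\Gamma^\dagger$ is the result of replacing every $\T$-atom in $\Gamma$ by $\bot$ and $\Delta^\dagger$ the result of replacing every $\T$-atom in $\Delta$ by $\top$. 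Since $\alpha$ is $\T$-free, the end-sequent is fixed. I would then verify by induction on the height of $\mathcal D$ that each translated inference is derivable in $\theory S$: $\theory S$-axioms and arithmetical rules survive verbatim; the two truth axioms become propositional tautologies, containing $\bot$ in the antecedent or $\top$ in the succedent; (Cut$_\L$) acts on a $\T$-free formula and is preserved; and each of the eight truth rules degenerates to an application of weakening, because every $\T$-atom occurring in its premises or conclusion lands on the trivialising side after translation.

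The only step requiring real attention is the case analysis over the truth rules, and among these the $(\lnot_\T)$ rules are the only ones in which the principal $\T$-atom switches sides between premise and conclusion; but it is precisely to accommodate this that the translation is asymmetric. Once every rule is checked, the translated tree is an $\theory S$-derivation of $\Seq\alpha$, yielding $\theory S\vdash\alpha$ and hence the corollary.
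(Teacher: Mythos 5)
Your first step—iterating Lemma~\ref{thm:2ndCutElim} to drive the truth rank to $0$, so that the derivation contains no instances of (Cut$^k_\T$)—is exactly the intended use of the cut elimination theorem. The gap is in the translation step. Your substitution is \emph{side}-sensitive (every $\T$-atom occurring in the antecedent goes to $\bot$, in the succedent to $\top$), but the arithmetical rules of the calculus are stated for arbitrary $\L_\T$-formulæ $\alpha$, so nothing in the definition of $\CT^*[\theory S]$ prevents, say, ($\lnot$R) from being applied to a formula containing a $\T$-atom: from $\Gamma,\T\psi\Seq\Delta$ one may pass to $\Gamma\Seq\Delta,\lnot\T\psi$ (and indeed $\T\psi\Seq\T\psi$, hence $\Seq\T\psi,\lnot\T\psi$, is derivable without any $\T$-cut). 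Under your translation the premise sends $\T\psi$ to $\bot$ while the conclusion sends the same occurrence, now sitting inside $\lnot\T\psi$ on the right, to $\top$; applying ($\lnot$R) to the translated premise produces $\Gamma^\dagger\Seq\Delta^\dagger,\lnot\bot$, not the required $\Gamma^\dagger\Seq\Delta^\dagger,\lnot\top$. So the claim that ``arithmetical rules survive verbatim'' fails precisely at ($\lnot$R)/($\lnot$L) applied to $\T$-containing formulæ; you only checked the truth rules ($\lnot_\T$R), ($\lnot_\T$L), which are not the problematic ones.

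There are two standard ways to close this. Either make the substitution \emph{polarity}-sensitive (replace an occurrence of a $\T$-atom by $\top$ or $\bot$ according to the sign of the occurrence relative to the sequent, so that $(\lnot\alpha)^{+}=\lnot(\alpha^{-})$ and all logical rules commute with the translation, while every truth axiom and every truth-rule conclusion becomes outright derivable because its principal $\T$-atom is a whole antecedent or succedent member); or first prove a purity lemma: in a derivation without $\T$-cuts of a $\T$-free end-sequent, no non-atomic formula containing $\T$ (in fact, no truth rule or truth axiom at all) can occur, since the principal $\T$-atom of a truth rule, and any compound $\T$-containing formula, can only be removed on the way down by a $\T$-cut or by reaching the end-sequent, both of which are excluded. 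The second route, tracing ancestors through the derivation, is the argument the paper implicitly relies on when it asserts that admissibility of (Cut$^k_\T$) yields conservativity of $\CT^*[\theory S]$ over $\theory S$; with either repair your argument goes through, but as written the rule-by-rule verification does not.
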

\subsection{Obstacles}
It remains to embed $\CT$ into $\CT^*$. 
Consider, for example, a derivation of the form
\begin{prooftree}
    \AxiomC{$\vdots$}\noLine
    \UnaryInfC{$\Seq\T\phi$}
        \AxiomC{$\T\phi\Seq\T\phi$}
            \RightLabel{($\vee_\T$R)}
        \UnaryInfC{$\T\phi\Seq\T(\phi\dvee\phi)$}
                \RightLabel{(Cut$_\T$)}
    \BinaryInfC{$\Seq\T(\phi\dvee\phi)$}
\end{prooftree}
If the left-most sub-derivation is cut-free then the conclusion is also trivially derivable without cuts (simply apply the rule ($\vee_\T$R) to the conclusion of the left sub-derivation). 
Thus the cut in the above derivation could be assigned a rank of 1 regardless of the logical complexity of $\phi$. 
This can be explained by the fact that the complexity of any formula appearing under the truth predicate in the conclusion of the above cut (namely an instantiation of the term $\phi\vee\phi$ by closed terms) has complexity no greater than one plus the complexity of the cut formula (that is $\phi$).
It is also easy to see that this phenomenon holds for many deeper derivations. 
However, this manner of assigning cut rank is not sufficiently robust when it comes to derivations containing multiple cuts. 
We take the next derivation (the presentation of which has been intentionally simplified) as an example of the problem.
\begin{prooftree}
  \AxiomC{$\vdots$}\noLine
  \UnaryInfC{$\T\phi(\bar a),\T\phi(\bar b)\Seq\Gamma$}
    \LeftLabel{($\forall_\T$L)}
  \UnaryInfC{$\T\forall x\phi,\T\phi(\bar b)\Seq\Gamma$}
    \LeftLabel{($\forall_\T$L)}
  \UnaryInfC{$\T\forall x\phi\Seq\Gamma$}
      \AxiomC{$\vdots$}\noLine
      \UnaryInfC{$\Seq\Gamma,\T\phi(\dot x)$}
    \RightLabel{($\forall_\T$R)}
      \UnaryInfC{$\Seq\Gamma,\T\forall x \phi$}
        \RightLabel{(Cut$_\T$)}
  \BinaryInfC{$\Seq\Gamma$}
\end{prooftree}
The standard reduction lemma technique transforms the above derivation into the following in which cuts are on formulæ with intuitively lower complexity.
\begin{prooftree}
    \AxiomC{$\vdots$}\noLine
    \UnaryInfC{$\T\phi(\bar a),\T\phi(\bar b)\Seq\Gamma$}
        \AxiomC{$\vdots$}\noLine
        \UnaryInfC{$\Seq\Gamma,\T\phi(\bar a)$}
            \LeftLabel{(Cut$_\T$)}
    \BinaryInfC{$\T\phi(\bar b)\Seq\Gamma$}
        \AxiomC{$\vdots$}\noLine
        \UnaryInfC{$\Seq\Gamma,\T\phi(\bar b)$}
            \LeftLabel{(Cut$_\T$)}
    \BinaryInfC{$\Seq\Gamma$}
\end{prooftree}
The critical question is how to assign a rank to each of the two cuts in the second derivation that is strictly smaller than the rank given to the cut in the first derivation.
Assuming $a$ is different from $b$, the rank associated to the bottom cut must take into account the rank that is assigned to $\T\phi(\bar b)$ in the left sub-derivation as after an application of the cut reduction procedure to the top-most cut the intuitive complexity of the formula represented by $\phi(\bar b)$ may have increased. 
This is especially relevant if the sub-derivation contains other applications of the cut rule to ``sub-formulæ''  of $\phi(\bar b)$, $\phi(\bar a)$ or $\phi(\dot x)$.
Thus, if there is an appropriate way to assign ranks to occurrences of the truth predicate so the natural reduction procedure can be proven to succeed, it will require a deep analysis of the derivation as a whole.

The core idea is to provide a method to replace the term $\phi$ by a new term $\gn {B_\phi}$ that encodes a formula of  $\L^+$ with bounded logical complexity.
This formula will be chosen so that $\phi$ provably encodes a substitution instance of $B_\phi$.
In the case of the previous example, if the left-most sub-derivation is actually cut-free with height $n$ then $B_{\forall x\phi}$ can be chosen with complexity bounded by $\size\Gamma\cdot 2^n$, this being the longest possible chain of terms following the sub-formula relation induced by the derivation.
The complexity of $B_{\forall x\phi}$ will, in general, also be at least $ n$ so that each relevant occurrence of a sub-formula of $\phi$ in the derivation can be replaced by the corresponding sub-formula of $B_{\forall x\phi}$.
If the same choice suffices for the occurrence of $\forall x\phi$ in the right sub-derivation then this single occurrence of cut has been collapsed into a form available in $\CT^*$.
\section{Approximations}\label{sec:Approximations}

Recall the language $\L^+$ which extends $\L$ by countably many fresh predicate symbols
\[
    \mathcal P=\{\P^i_j\mid \text{$i$, $j<\omega$ and $\P^i_j$ is a predicate symbol of arity $i$}\}
\]
and a new propositional constant $\epsilon$. 
The additional predicate symbols enable us to explicitly reduce the complexity of formulæ that occur under the truth predicate in $\CT$-derivations.
This is achieved by the use of {\em approximations}, an idea that was utilised by Kotlarski et al in \cite{KKL81}.

An {\em assignment} is any function $g\colon X\to\L^+$ such that $X\subseteq\mathcal P$ is a finite set and for every $i$, $j$, if $\P^i_j\in X$ then $g(\P^i_j)$ is a formula with arity $i$. 
Given an assignment $g$ and an $\L^+$ formula $\phi$, we write $\phi[g]$ for the result of replacing each predicate $\P^i_j(s_1,\ldots,s_i)$ occurring in $\phi$ by $g(\P^i_j)(s_1,\ldots,s_i)$, if $g(\P^i_j)$ is defined, and $\epsilon$ otherwise.
If $\b\phi=(\phi_0,\ldots,\phi_m)$ and $\b\psi=(\psi_0,\ldots,\psi_m)$ are two sequences of closed $\L^+$ formulæ we say $\b\phi$ {\em approximates} $\b\psi$ if there exists an assignment $g$ such that $\psi_i=\phi_i[g]$ for each $i\le m$. 

For a given sequence $\b\phi$ of $\L^+$, a collection of approximations of $\b\phi$ are distinguished. The {\em $n$-th approximation} of $\b\phi$, defined below, is a particular approximation to $\b\phi$ that has logical complexity no more than $\lh{\b\phi}\cdot{2^n}$, where $\lh{\b\phi}$ denotes the number of elements in $\b\phi$. 
\subsection{Occurrences and parts}\label{sec:Occurrences}
Let $w$, $z$, $z_1$, $z_2$, \dots be fresh variable symbols.
Given a formula $\phi$ of $\L$ we first define a formula $\bar\phi$ of $\L\cup\{w\}$ in two steps: $\phi^*$ is the result of replacing in $\phi$ every free variable by $w$, and $\bar\phi$ is obtained from $\phi^*$ by replacing each term in which the only variable that occurs is $w$, by $w$.
Thus any term occurring in $\bar\phi$ is either simply the variable $w$ or contains a bound occurrence of a variable different from $w$.

For each formula $\phi$, we let $O(\phi)$ denote the set of {\em occurrences} of $\phi$, pairs $(\psi,s)$ such that $\psi$ is a formula of $\L\cup\{w,z\}$ in which the variable $z$ occurs exactly once, $s$ is a term of $\L\cup\{w\}$ which is free for $z$ in $\psi$ and $\phi=\psi[s/z]$. 
Notice that if $(\psi,s)\in O(\bar\phi)$ then $s=w$. 

The construction of $\bar\phi$ and $O(\phi)$ are such that for each formula $\phi$ of $\L$ there is a uniquely determined function $t_\phi\colon O(\bar\phi)\to \Term_{\L}$ for which $\phi$ is the result of replacing within $\bar\phi$ each occurrence of the variable $w$ by the appropriate value of $t_\phi$.
We call two formulæ $\phi$, $\psi$ {\em equivalent}, written $\phi\sim\psi$, if $\bar{\phi}=\bar{\psi}$.
\begin{lemma}\label{lem:Template}
    Let $\Phi$ be a set of $\L$ formulæ such that for every $\phi$, $\psi\in\Phi$, ${\phi}\sim{\psi}$. Then there is some number $l$ and formula $\vartheta_\Phi(z_1,\ldots,z_l)$, called the {\em template} of $\Phi$, such that for every $\phi\in\Phi$ there are terms $s_1$, \ldots, $s_l$ so that $\phi=\vartheta_\Phi(s_1,\ldots,s_l)$.
\end{lemma}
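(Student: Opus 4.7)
The plan is to extract from $\bar\phi$ itself the desired template by replacing the occurrences of $w$ by distinct fresh variables. Since any two members of $\Phi$ are equivalent, there is a common formula $\beta$ with $\bar\phi=\beta$ for every $\phi\in\Phi$. Let $l$ be the number of occurrences of $w$ in $\beta$; equivalently, $l=\size{O(\beta)}$. Pick variables $z_1,\dots,z_l$ all distinct and different from every variable (free or bound) appearing in $\beta$, and enumerate the occurrences of $w$ in $\beta$ as $o_1,\dots,o_l$. I then define $\vartheta_\Phi(z_1,\dots,z_l)$ to be the $\L$-formula obtained from $\beta$ by replacing, for each $i\le l$, the $i$-th occurrence of $w$ by $z_i$.

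The second step is to verify that, for each $\phi\in\Phi$, taking $s_i:=t_\phi(o_i)$ gives $\vartheta_\Phi(s_1,\dots,s_l)=\phi$. This reduces to the defining property of $t_\phi$: by construction, $\phi$ is precisely the formula obtained from $\bar\phi=\beta$ by replacing at each $o_i$ the occurrence of $w$ by $t_\phi(o_i)$. Since $\vartheta_\Phi$ differs from $\beta$ only in having $z_i$ where $\beta$ has $w$ at position $o_i$, the simultaneous substitution $[s_1/z_1,\dots,s_l/z_l]$ into $\vartheta_\Phi$ produces the same syntactic object as the point-by-point substitution of the $s_i$ for $w$ into $\beta$, namely $\phi$.

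The one delicate point, which I expect to be the main obstacle, is capture-freeness of the substitution defining $\vartheta_\Phi(s_1,\dots,s_l)$; that is, that each $s_i$ is free for $z_i$ in $\vartheta_\Phi$. The choice of $z_i$ as fresh variables ensures $z_i$ is free at its unique occurrence in $\vartheta_\Phi$. For the converse direction, recall that a position at which $w$ occurs in $\bar\phi$ arises from collapsing a subterm of $\phi^*$ whose only variable is $w$, i.e.\ a subterm of $\phi$ built solely from constants and the free variables of $\phi$. Such a subterm contains no variable bound in $\phi$ above that position, so $s_i$ contains no variable that is quantified above $z_i$ in $\vartheta_\Phi$ (the bound-variable structure of $\vartheta_\Phi$ and $\phi$ agreeing by construction). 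Hence the substitution is capture-free and the identification $\vartheta_\Phi(s_1,\dots,s_l)=\phi$ is legitimate.

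A minor remaining matter is that the family of fresh variables $z_1,\dots,z_l$ depends only on $\beta$, not on the individual $\phi\in\Phi$, which suffices because $l$ and the slot enumeration are determined by $\beta$ alone. The formula $\vartheta_\Phi$ thus constructed witnesses the lemma.
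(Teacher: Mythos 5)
Your construction is correct, and it proves the lemma as stated, but it is not the construction the paper uses. You introduce one fresh variable per occurrence of $w$ in the common formula $\beta=\bar\phi$, so your template has $l=\lvert O(\beta)\rvert$ argument places and each $\phi\in\Phi$ is recovered by filling the $i$-th place with $t_\phi(o_i)$; your capture-freeness check is also sound, since every collapsed term consists solely of occurrences that are free in $\phi$, hence none of its variables lies under a quantifier above that position, and the quantifier structure of $\vartheta_\Phi$ agrees with that of $\beta$. The paper instead quotients the set of occurrences $O(\Phi)$ by the equivalence relation $E_\Phi$ that identifies two occurrences exactly when they receive the same $t_\phi$-value for \emph{every} $\phi\in\Phi$, takes $l$ to be the number of equivalence classes, and assigns one variable per class. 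For the bare statement of the lemma the two constructions are interchangeable: in either case each member of $\Phi$ is an instance of $\vartheta_\Phi$ and the witnessing terms $s^\phi_1,\dots,s^\phi_l$ are uniquely determined, which is what the later definition of $F_{\b\phi,n}$ and the notation $s^\psi_1,\dots,s^\psi_{a_j}$ require. What the paper's refinement buys is a canonical and more economical template: argument places that carry identical terms throughout $\Phi$ are merged, so the template and its arity reflect the whole set $\Phi$ rather than a single representative, whereas yours is the maximally general per-occurrence template. This is a difference of economy and canonicity, not of correctness.
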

\begin{proof}
Suppose $\Phi$ is a set of formulæ satisfying the hypotheses of the lemma.
Notice that $O(\bar\phi)=O(\bar\psi)$ for every $\phi$, $\psi\in\Phi$, so $O(\Phi)$ has a natural definition as $O(\bar\phi)$ for some $\phi\in\Phi$.
The functions $\{t_\phi\mid\phi\in\Phi\}$ induce an equivalence relation $E_\Phi$ on $O(\Phi)$ by setting
\[
    (\chi,s)\mathbin{E_\Phi}(\psi,t) \iff \text{for every $\phi\in\Phi$, $t_\phi(\chi,s)=t_\phi(\psi,t)$.}
\]
Let $l$ be the number of $E_\Phi$-equivalence classes in $\Phi$.
For each $\phi\in\Phi$, the function $t_\phi$ is constant on $O(\Phi)/{E_\Phi}$, whence $\vartheta_\Phi(z_1,\dots,z_l)$ is easily defined.
\end{proof}

If $\b\phi=(\phi_0,\phi_1,\ldots,\phi_k)$ is a non-empty sequence of $\L$ formulæ, then the {\em set of parts of $\b\phi$}, $\Pi(\b\phi)$, is the collection of pairs $(\psi,\chi)$ such that $\psi$ is a formula of $\L\cup\{\epsilon\}$ in which $\epsilon$ occurs exactly once, $\chi$ is a formula of $\L$ and for some $i\le n$, $\phi_i$ is the result of replacing $\epsilon$ by $\chi$ in $\psi$. 
Notice that $\size{\Pi(\b\phi)}<k\cdot 2^{d({\b\phi})}$ where $d({\b\phi})$ denotes maximal logical complexity of formulæ occurring in $\b\phi$ with atomic formulæ having depth 0. 

We now define an ordering $\prec$ on $\Pi(\b\phi)$ as follows. $(\phi,\chi)\prec(\phi',\chi')$ just in case there exists $\psi\in\L\cup\{\epsilon\}$ such that $\phi'[\psi/\epsilon]=\phi$ and $\psi[\chi/\epsilon]=\chi'$. Informally, this means that $\phi[\chi/\epsilon]=\phi'[\chi'/\epsilon]$ and the occurrence of $\epsilon$ in $\phi$ corresponds to some sub-formula of $\chi'$. 
Note that this definition of $\prec$ is more refined than the ordering also denoted $\prec$ employed in \cite{KKL81}. The reasons for this will be highlighted later.
The {\em depth} of a pair $(\phi,\chi)\in\Pi(\b\phi)$, denoted $d(\phi,\chi)$, is its (reverse) order-type in $\prec$, that is the number of logical connectives and quantifiers between $\phi$ and the occurrence of $\epsilon$ in $\phi$.
Making use of $\prec$ and $\sim$ the following sets can be defined.
\begin{align*}
    \Pi^{0}(\b\phi,n)
        =\{(\phi,\chi)\in\Pi(\b\phi)&\mid d(\phi,\chi)\le n\}
        \\
    \Pi^{m+1}(\b\phi,n)
        =\{(\phi,\chi)\in\Pi(\b\phi)&\mid\exists (\phi_1,\chi_1)\in\Pi^{m}(\b\phi,n)\,\exists(\phi_0,\chi_0)\in\Pi^{0}(\b\phi,n)
        \\&\quad
        \wedge {\chi_0}\sim{\chi_1}\wedge (\phi,\chi)\prec (\phi_1,\chi_1)
        \\&\quad
        \wedge d(\phi,\chi)-d(\phi_1,\chi_1)\le n-d(\phi_0,\chi_0)\}
\end{align*}
The requirement ``$\exists(\phi_0,\chi_0)\in\Pi^{0}(\b\phi,n)$'' serves only to ensure the set $\Pi^{m+1}(\b\phi,n)$ does not grow too large. 
Thus $\Pi^{m+1}(\b\phi,n)$ consists of those parts of $\b\phi$ that are approximated by some $(\phi_1,\chi_1)$ in $\Pi^{m}(\b\phi,n)$ such that 
\begin{enumerate}\renewcommand\theenumi{\roman{enumi}}\renewcommand\labelenumi{\theenumi)}
    \item the template of $\chi_1$ occurs somewhere in $\b\phi$ with depth at most $n$, and 
    \item the depth of $(\phi,\chi)$ is regulated by the depth of $(\phi_1,\chi_1)$.
\end{enumerate}

\subsection{Approximating formulæ}\label{sec:ApproxFormulae}
The first crucial observation is that 
if $(\phi,\chi)\in\Pi^{m}(\b\phi,n)$ then there exists $(\phi',\chi')\in\Pi^{0}(\b\phi,n)$ with $\chi\sim{\chi'}$.
As a result, if
\[
    (\phi_0,\chi_0)\prec(\phi_1,\chi_1)\prec\cdots\prec(\phi_k,\chi_k)
\]
and $(\phi_i,\chi_i)\in\Pi^{m}(\b\phi,n)$ for every $i\le k$ then $k<\lh{\b\phi}\cdot2^n$, whence
\begin{gather}\label{eqn:PiBounded}
    (\phi,\chi)\in \Pi^m(\b\phi,n) \text{ implies } d(\phi,\chi) \le \lh{\b\phi}\cdot2^n
\end{gather}
and so $\size{\Pi^{m}(\b\phi,n)}\le 2^{\lh{\b\phi}\cdot2^n}$ for every $m$.
Since these bounds are independent of $m$, it follows there exists $k$ such that $\Pi^{k}(\b\phi,n)=\Pi^{k+1}(\b\phi,n)$. 

Based on the choice of $k$ two further sets are defined:
\begin{align*}
    \Gamma(\b\phi,n)=\{\psi\in\L&\mid\exists\phi(\phi,\psi)\in\Pi^{k}(\b\phi,n)\},
    \\
    \Gamma_I(\b\phi,n)=\{\psi\in\L&\mid\text{$\exists\phi$ $(\phi,\psi)$ is $\prec$-minimal in $\Pi^k(\b\phi,n)$}\}.
\end{align*}

Let $\Gamma_I^\sim(\b\phi,n)$ be the set of $\sim$-equivalence classes of $\Gamma_I(\b\phi,n)$ and suppose $\Phi\in\Gamma_I^\sim(\b\phi,n)$. 
We denote by $\vartheta_\Phi(z_1,\ldots,z_{l_\Phi})$ the {\em template} of $\Phi$ as determined in lemma~\ref{lem:Template}, and for each $\phi\in\Phi$ let $s^\phi_1$, \dots, $s^\phi_{l_\Phi}$ denote the terms for which $\phi=\vartheta_\Phi(s^\phi_1,\dots,s^\phi_{l_\Phi})$.

Utilising this notation a function $F_{\b\phi,n}\colon\Gamma(\b\phi,n)\to\L^+$ can be defined by recursion through $\prec$.
Fix some enumeration $\Phi_0$, \dots, $\Phi_n$ of the elements of $\Gamma_I^\sim(\b\phi,n)$, and let $a_j$ denote the number of arguments of the template $\vartheta_{\Phi_j}$.
If $\psi\in\Gamma_I(\b\phi,n)$ then either $\psi$ is atomic, whence we define $F_{\b\phi,n}(\psi)=\psi$, or $\psi\in\Phi_j\in\Gamma_I^\sim(\b\phi,n)$, whence $F_{\b\phi,n}(\psi)$ is chosen to be the formula $\P_j^{a_j}(s_1^\psi,\ldots,s_{a_j}^\psi)$. 
In the case $\psi\in\Gamma(\b\phi,n)\setminus\Gamma_I(\b\phi,n)$, $F_{\b\phi,n}(\psi)$ is defined to commute with the external connective or quantifier in $\psi$.

Now the {\em n-th approximation} of $\b\phi=(\phi_0,\ldots,\phi_k)$ is defined to be the sequence
\[
    F_{\b\phi,n}(\b\phi)=(F_{\b\phi,n}(\phi_0),\ldots, F_{\b\phi,n}(\phi_k)).
\]
These approximations have some nice features. For instance
\begin{lemma}\label{lem:1}
    Let $\b\phi$ be a sequence of $\L^+$-formulæ. Then the $i$-th approximation to $\b\phi$ is an approximation of $\b\phi$ and  an approximation of the $j$-th approximation whenever $i\le j$.
\end{lemma}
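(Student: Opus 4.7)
The plan is to prove both claims by constructing appropriate assignments and verifying by $\prec$-induction that the resulting substitution yields the desired sequence.

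For the first claim, define an assignment $g_1$ on the fresh predicates $\{\P^{a_k}_k\mid \Phi_k\in\Gamma_I^\sim(\b\phi,i)\}$ used in building $F_{\b\phi,i}$ by setting $g_1(\P^{a_k}_k)=\vartheta_{\Phi_k}$, the template of $\Phi_k$ viewed as a formula of arity $a_k$. I would then show by induction on the $\prec$-rank that for every $\psi\in\Gamma(\b\phi,i)$, $F_{\b\phi,i}(\psi)[g_1]=\psi$. In the base case $\psi\in\Phi_k\subseteq\Gamma_I(\b\phi,i)$, and either $\psi$ is atomic (whence $F_{\b\phi,i}(\psi)=\psi$ is untouched by $g_1$) or $F_{\b\phi,i}(\psi)=\P^{a_k}_k(s^\psi_1,\dots,s^\psi_{a_k})$ and applying $g_1$ returns $\vartheta_{\Phi_k}(s^\psi_1,\dots,s^\psi_{a_k})=\psi$. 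For $\psi\in\Gamma(\b\phi,i)\setminus\Gamma_I(\b\phi,i)$, $F_{\b\phi,i}$ commutes with the outermost connective or quantifier and the induction hypothesis applied to the immediate sub-parts concludes. Since each component of $\b\phi$ lies in $\Gamma(\b\phi,i)$, this yields the first claim.

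For the second claim, assume $i\le j$. The first step is a monotonicity lemma: $\Pi^m(\b\phi,i)\subseteq\Pi^m(\b\phi,j)$ for every $m$, by induction on $m$. The base case is immediate from the depth inequality defining $\Pi^0$, and the inductive step uses that the bound $d(\phi,\chi)-d(\phi_1,\chi_1)\le n-d(\phi_0,\chi_0)$ slackens as $n$ increases. Combined with the stability of $\Pi^m(\b\phi,j)$ above the stabilisation index, this gives $\Gamma(\b\phi,i)\subseteq\Gamma(\b\phi,j)$, so $F_{\b\phi,j}$ is defined on the whole domain of $F_{\b\phi,i}$. The second step is to establish template uniformity for $F_{\b\phi,j}$ on each class $\Phi_k\in\Gamma_I^\sim(\b\phi,i)$: that there is a single $\L^+$-formula $\Theta_k(z_1,\dots,z_{a_k})$ with $F_{\b\phi,j}(\psi)=\Theta_k(s^\psi_1,\dots,s^\psi_{a_k})$ for every $\psi\in\Phi_k$. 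Granting this, setting $g_2(\P^{a_k}_k)=\Theta_k$ (and using disjoint enumerations for the two approximations, WLOG) and running the $\prec$-induction of Part 1 with $F_{\b\phi,j}$ in place of the identity yields $F_{\b\phi,i}(\b\phi)[g_2]=F_{\b\phi,j}(\b\phi)$, as required.

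The main obstacle is template uniformity of $F_{\b\phi,j}$ on equivalence classes $\Phi_k$ drawn from the $i$-th rather than the $j$-th approximation. The $\prec$-minimal parts relevant to the $j$-th approximation will in general refine those of the $i$-th, so the recursive computation of $F_{\b\phi,j}(\psi)$ for $\psi\in\Phi_k$ can introduce fresh predicate symbols of the $j$-th enumeration deep inside $\psi$. Verifying that these substitutions depend only on the terms $s^\psi_1,\dots,s^\psi_{a_k}$, and therefore assemble into a single $\Theta_k$ common to all $\psi\in\Phi_k$, will require a careful structural induction through the template $\vartheta_{\Phi_k}$, exploiting that the relations $\sim$ and $\prec$ on sub-parts are preserved under the instantiation of the template's argument positions.
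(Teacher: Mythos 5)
Your first claim is handled correctly: the assignment $g_1(\P^{a_k}_k)=\vartheta_{\Phi_k}$ together with the induction along $\prec$ (predicate case returns $\vartheta_{\Phi_k}(s^\psi_1,\dots,s^\psi_{a_k})=\psi$, non-minimal case commutes with the outer connective exactly as substitution does) is the natural argument, and your monotonicity observation $\Pi^m(\b\phi,i)\subseteq\Pi^m(\b\phi,j)$ for $i\le j$ is also sound, since the defining inequality only slackens as the depth parameter grows. For the record, the paper states this lemma without proof, as a feature read off from the construction, so there is no official argument to compare against; your proposal is evaluated on its own terms.

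For the second claim, however, there is a genuine gap, and you have in effect named it yourself: the entire content of the lemma is the ``template uniformity'' step, namely that for each class $\Phi_k\in\Gamma_I^\sim(\b\phi,i)$ there is a single $\L^+$-formula $\Theta_k$ with $F_{\b\phi,j}(\psi)=\Theta_k(s^\psi_1,\dots,s^\psi_{a_k})$ for every $\psi\in\Phi_k$ (or at least for every occurrence of such $\psi$ inside $\b\phi$). Deferring this as ``the main obstacle'' requiring ``a careful structural induction'' leaves the proof incomplete, because without it the assignment $g_2$ need not even be well defined: two occurrences of the same predicate $\P^{a_k}_k$ in $F_{\b\phi,i}(\b\phi)$ could demand different values. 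The delicate point is that membership of a subformula in $\Gamma_I(\b\phi,j)$ is an existential statement about occurrences and their depths, so for two $\sim$-equivalent members $\psi,\psi'\in\Phi_k$ sitting at different depths in $\b\phi$ it is not automatic that corresponding subformulas have matching status (minimal in $\Pi^{k'}(\b\phi,j)$, merely present, or absent), nor that they fall into the same class of $\Gamma_I^\sim(\b\phi,j)$ with the extracted terms lining up with $s^\psi$ versus $s^{\psi'}$; this is exactly what forces the descent patterns of $F_{\b\phi,j}$ on $\psi$ and $\psi'$ to agree, and it is the step you would need to prove (using that $\sim$, $\prec$ and the depth bookkeeping in the definition of $\Pi^{m+1}(\b\phi,n)$ are stable under instantiating the template's argument positions). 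A smaller point: since the lemma is stated for sequences of $\L^+$-formulæ, in Part 1 you should also require the predicates enumerated for $F_{\b\phi,i}$ to be disjoint from those already occurring in $\b\phi$ (you impose disjointness only between the two approximations), otherwise $[g_1]$ can disturb atomic $\P$-subformulæ of $\b\phi$ itself.
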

\begin{lemma}\label{lem:1.5}
    Every occurrence of a predicate symbol from $\mathcal P$ in the $n$-th approximation of $\b\phi$ has depth at least $n$ in $\b\phi$. Moreover, every member of the $n$-th approximation of $\b\phi$ has logical depth no greater than ${\lh{\b\phi}\cdot 2^n}$.
\end{lemma}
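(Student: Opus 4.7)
The plan is to unpack each clause directly from the definitions of $F_{\b\phi,n}$, $\Pi^k(\b\phi,n)$, and the ordering $\prec$. The key observation is that an occurrence of a predicate from $\mathcal{P}$ in $F_{\b\phi,n}(\phi_i)$ arises precisely at a position corresponding to some $\prec$-minimal pair $(\phi,\chi)\in\Pi^k$ with $\chi$ non-atomic, and the depth of such an occurrence in the approximation coincides with $d(\phi,\chi)$, since $F_{\b\phi,n}$ commutes with all outer connectives down to that position. Consequently both assertions reduce to depth bounds on pairs in $\Pi^k$.

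For the first assertion I fix a $\prec$-minimal non-atomic pair $(\phi,\chi)\in\Pi^k$ and argue by contradiction, supposing $d(\phi,\chi)<n$. Because $\chi$ is non-atomic, I choose an immediate subformula position to obtain $(\phi',\chi')\in\Pi(\b\phi)$ with $(\phi',\chi')\prec(\phi,\chi)$ and $d(\phi',\chi')=d(\phi,\chi)+1$. To contradict minimality it suffices to show $(\phi',\chi')\in\Pi^{k+1}=\Pi^k$. The witnesses required by the definition of $\Pi^{k+1}$ are all supplied by $(\phi,\chi)$ itself: setting $(\phi_1,\chi_1)=(\phi,\chi)\in\Pi^k$ provides the $\prec$-comparison, while setting $(\phi_0,\chi_0)=(\phi,\chi)$ places it in $\Pi^0$ (the hypothesis $d(\phi,\chi)<n$ ensures $d(\phi,\chi)\le n$) and gives $\chi_0\sim\chi_1$ trivially. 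The remaining inequality reduces to $1\le n-d(\phi,\chi)$, which holds because $d(\phi,\chi)\le n-1$.

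For the second assertion I observe that $F_{\b\phi,n}(\phi_i)$ is constructed by recursive descent through $\phi_i$, commuting with every outer connective until the recursion reaches either an atomic subformula or a $\prec$-minimal non-atomic pair at which a fresh predicate from $\mathcal P$ is substituted. Every leaf of $F_{\b\phi,n}(\phi_i)$ therefore sits at depth $d(\phi,\chi)$ for some $(\phi,\chi)\in\Pi^k$, and the already-established bound \eqref{eqn:PiBounded} yields $d(\phi,\chi)\le\lh{\b\phi}\cdot 2^n$, from which the depth bound on $F_{\b\phi,n}(\phi_i)$ is immediate.

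The main subtlety lies in the first part: recognising that the single pair $(\phi,\chi)$ can simultaneously play the role of the parent witness $(\phi_1,\chi_1)\in\Pi^k$ and the shallow-template witness $(\phi_0,\chi_0)\in\Pi^0$. This double use is exactly what forces every $\prec$-minimal non-atomic pair to sit at depth at least $n$: if it did not, the sub-pair $(\phi',\chi')$ would be reabsorbed into $\Pi^{k+1}=\Pi^k$, violating minimality. Once this is identified, the rest of the argument is routine computation with the data already assembled before the lemma.
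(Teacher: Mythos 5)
Your core computation is sound, but there is a gap at the step where you claim that an occurrence of a predicate from $\mathcal P$ in $F_{\b\phi,n}(\phi_i)$ ``arises precisely at a position corresponding to some $\prec$-minimal pair $(\phi,\chi)\in\Pi^{k}(\b\phi,n)$, and the depth of the occurrence coincides with $d(\phi,\chi)$''. The sets $\Gamma_I(\b\phi,n)$ and the map $F_{\b\phi,n}$ are defined at the level of \emph{formulæ}, not occurrences: $\chi\in\Gamma_I(\b\phi,n)$ as soon as \emph{some} pair $(\phi,\chi)$ is $\prec$-minimal in $\Pi^k(\b\phi,n)$, and $F_{\b\phi,n}$ then replaces $\chi$ by a $\mathcal P$-atom at \emph{every} occurrence of $\chi$ reached in the recursion, including occurrences whose position is not a minimal pair (and need not even lie in $\Pi^k(\b\phi,n)$). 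The depth of such an occurrence is in general different from $d(\phi,\chi)$ for the minimal witness, so your argument — which bounds only the depth of minimal non-atomic pairs — does not yet rule out a $\mathcal P$-atom appearing at depth $<n$ via a second, shallow occurrence of the same formula. Indeed, the ``main subtlety'' is the opposite of the one you single out: the point is not that one pair can serve as both witnesses, but that the two witnesses must be allowed to \emph{differ}.

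The repair uses exactly your computation with the witnesses decoupled. Suppose $\chi\in\Gamma_I(\b\phi,n)$ is non-atomic, witnessed by a $\prec$-minimal $(\phi_1,\chi)\in\Pi^k(\b\phi,n)$, and suppose towards a contradiction that $\chi$ occurs somewhere in $\b\phi$ at a position $(\phi_0,\chi)$ of depth $<n$; then $(\phi_0,\chi)\in\Pi^0(\b\phi,n)$. Taking an immediate sub-pair $(\phi',\chi')\prec(\phi_1,\chi)$ with $d(\phi',\chi')=d(\phi_1,\chi)+1$, the witnesses $(\phi_1,\chi)\in\Pi^k$ and $(\phi_0,\chi)\in\Pi^0$ satisfy $\chi\sim\chi$ and $d(\phi',\chi')-d(\phi_1,\chi)=1\le n-d(\phi_0,\chi)$, so $(\phi',\chi')\in\Pi^{k+1}(\b\phi,n)=\Pi^k(\b\phi,n)$, contradicting minimality of $(\phi_1,\chi)$. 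Hence no non-atomic member of $\Gamma_I(\b\phi,n)$ occurs anywhere in $\b\phi$ at depth $<n$, which is what the first claim of the lemma actually requires (your special case $(\phi_0,\chi_0)=(\phi_1,\chi_1)$ is the instance where the shallow occurrence is the minimal pair itself). For the second claim your route — leaf depths of $F_{\b\phi,n}(\phi_i)$ are depths of pairs in $\Pi^k(\b\phi,n)$, bounded by \eqref{eqn:PiBounded} — is the justification the paper itself intends, which states the lemma without further proof and attributes the upper bound to \eqref{eqn:PiBounded}.
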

\begin{lemma}\label{lem:1.7}
    Suppose $(\b\phi',\b\psi')$ is an approximation of $(\b\phi,\b\psi)$ such every element $\chi\in\b\phi'\cup\b\psi'$ has logical complexity at most $n$. Then $(\b\phi',\b\psi')$ is an approximation of the $n$-th approximation of $(\b\phi,\b\psi)$.
\end{lemma}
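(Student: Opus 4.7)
The plan is to build, from the assignment $g$ witnessing $(\b\phi,\b\psi) = (\b\phi',\b\psi')[g]$, an assignment $h$ realising the equality $(\b\phi^*,\b\psi^*) = (\b\phi',\b\psi')[h]$, where $(\b\phi^*,\b\psi^*) = F_{(\b\phi,\b\psi),n}(\b\phi,\b\psi)$ denotes the $n$-th approximation of $(\b\phi,\b\psi)$. The essential geometric observation is that the hypothesis on the complexity of $\b\phi' \cup \b\psi'$ ensures every $\P^i_j$ atom in $(\b\phi',\b\psi')$ sits at depth at most $n$, hence each sub-formula $g(\P^i_j)(\bar t)$ it expands to is rooted at depth at most $n$ inside $(\b\phi,\b\psi)$; meanwhile Lemma~\ref{lem:1.5} locates every $F$-collapse at depth at least $n$. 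Consequently the shallow ``$g$-collapses'' and the deep ``$F$-collapses'' never conflict, and one can compose them.

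Concretely, for each predicate symbol $\P^i_j$ in the domain of $g$, I will define $h(\P^i_j)$ as the formula obtained from $g(\P^i_j)$ by replacing every sub-formula of $g(\P^i_j)$ whose instances in $(\b\phi,\b\psi)$ belong to some $\sim$-class $\Phi_k \in \Gamma_I^\sim((\b\phi,\b\psi),n)$ with the atom $\P^{a_k}_k$ applied to the argument terms that correspond, under the template $\vartheta_{\Phi_k}$, to that sub-formula's position inside $g(\P^i_j)$. Once $h$ is available, a structural induction on $(\b\phi',\b\psi')$ verifies $(\b\phi',\b\psi')[h] = (\b\phi^*,\b\psi^*)$: arithmetical nodes are handled by the commuting clauses in the recursive definition of $F_{(\b\phi,\b\psi),n}$, while at an occurrence of $\P^i_j(\bar t)$ the definition of $h$ has been crafted to reproduce exactly the effect of $F$ on the corresponding expansion $g(\P^i_j)(\bar t)$ inside $(\b\phi,\b\psi)$.

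The main obstacle is well-definedness of $h$: multiple occurrences of the same $\P^i_j$ in $(\b\phi',\b\psi')$ produce, via $g$, expansions $g(\P^i_j)(\bar t_1), g(\P^i_j)(\bar t_2), \ldots$ that must all undergo the same pattern of $F$-collapses for my recipe to yield a single well-defined $h(\P^i_j)$. The key point is that sub-formulas at matching template positions across different instances are substitution instances of a common template, namely the sub-formula of $g(\P^i_j)$ at that position, and hence are $\sim$-equivalent. Because the iterative definitions of $\Pi^m((\b\phi,\b\psi),n)$ and $\Gamma_I^\sim((\b\phi,\b\psi),n)$ refer only to $\sim$-classes, template information, and depth data, all of which are preserved by the equivalence, either all matching positions are collapsed by $F$ to atoms in a common $\sim$-class or none are. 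Tracking this uniformity through each iteration $\Pi^m$, and in particular verifying that $\sim$-equivalent sub-formulas sit in the same layer and share their $\prec$-minimality status, is where I expect the bulk of the technical work to lie.
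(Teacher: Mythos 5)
Your overall strategy --- composing the assignment $g$ with the $F$-collapses, and using lemma~\ref{lem:1.5} together with the complexity bound on $\b\phi'\cup\b\psi'$ to keep the two kinds of replacement from interfering --- is the natural route (the paper itself states the lemma without proof), and your first two paragraphs are fine in outline. The problem is the third: well-definedness of $h$ is not a technicality that can be deferred, it is essentially the whole content of the lemma, and the invariance principle you invoke to dispose of it is false as stated. Membership in $\Pi^{m}((\b\phi,\b\psi),n)$ and $\prec$-minimality in $\Pi^{k}$ are properties of \emph{parts}, i.e.\ of occurrences, not of formulæ or of $\sim$-classes: $\Pi^{0}$ is defined by raw depth, and the closure clause for $\Pi^{m+1}$ constrains the depth difference to an ancestor occurrence. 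Two occurrences of the same predicate $\P^i_j$ in $(\b\phi',\b\psi')$ may sit at different depths $d_1\neq d_2\le n$, so the expansions $g(\P^i_j)(\bar t_1)$ and $g(\P^i_j)(\bar t_2)$ are rooted at different depths and have entirely different ancestor contexts inside $(\b\phi,\b\psi)$; ``depth data'' is precisely what is \emph{not} preserved when passing between corresponding subformula occurrences of the two expansions. So the claim that ``either all matching positions are collapsed or none are'' does not follow from the shape of the definitions; it needs a genuine argument.

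What that argument has to do is show, by induction on the closure stages, that corresponding positions inside the two expansions receive the same $\Pi^{k}$-membership and the same collapse status. The available resources are: (i) both expansion roots lie at depth $\le n$, so the common $\sim$-class of $g(\P^i_j)(\bar t_1)\sim g(\P^i_j)(\bar t_2)$ (and of each pair of corresponding subformulas) has an occurrence of depth $\le n$, feeding a budget $n-d_0$ into the closure clause that is the same for both copies; (ii) by lemma~\ref{lem:Template} corresponding collapsed subformulas lie in a single class $\Phi_j$ and hence collapse to the same $\P^{a_j}_j$ with arguments read off from corresponding term positions, which is what makes a single $h(\P^i_j)$ possible at all. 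The delicate points you would still have to handle are that membership of a part can also be caused by ancestors lying \emph{above} the expansion root, or by an occurrence of the same formula elsewhere in the sequence (recall that $F_{\b\phi,n}$ and $\Gamma_I$ are defined on formulæ via an existential over occurrences), and, symmetrically, that a shallow copied node is never collapsed merely because some deep occurrence of the same formula happens to be $\prec$-minimal --- here one needs the observation that a non-atomic formula with an occurrence of depth $<n$ can never belong to $\Gamma_I((\b\phi,\b\psi),n)$, since that shallow occurrence supplies, through the closure clause, a positive budget below every occurrence of its $\sim$-class. None of this is carried out in your proposal; as it stands it is a plausible plan whose decisive step is missing.
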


The upper bound of lemma~\ref{lem:1.5} holds on account of \eqref{eqn:PiBounded}.
A consequence of the previous lemmas is the following.
\begin{lemma}\label{lem:KKLvee}
    If $(\b\phi',\psi'_0\vee\psi'_1)$ is the $n$-th approximation of $(\b\phi,\psi_0\vee\psi_1)$ and $m<n$ then the $m$-th approximation of $(\b\phi,\psi_i)$ is an approximation of $(\b\phi',\psi'_i)$.
\end{lemma}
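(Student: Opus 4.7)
The plan is to construct an explicit assignment $g$ such that substituting $g$ into the $m$-th approximation $A_m$ of $(\b\phi,\psi_i)$ recovers $(\b\phi',\psi'_i)$.

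First, I would observe that the hypothesis $\psi'_0\vee\psi'_1 = F_{(\b\phi,\psi_0\vee\psi_1),n}(\psi_0\vee\psi_1)$ forces this function to commute with the outer $\vee$: the formula $\psi_0\vee\psi_1$ cannot belong to $\Gamma_I((\b\phi,\psi_0\vee\psi_1),n)$, for otherwise $F$ would have replaced it by a fresh predicate rather than preserving the disjunction. Hence $\psi'_i=F_{(\b\phi,\psi_0\vee\psi_1),n}(\psi_i)$ and likewise for each $\phi'_j$. Restricting the assignment supplied by Lemma~\ref{lem:1} (which witnesses that $(\b\phi',\psi'_0\vee\psi'_1)$ approximates $(\b\phi,\psi_0\vee\psi_1)$) then also shows that $(\b\phi',\psi'_i)$ is an approximation of $(\b\phi,\psi_i)$.

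Second, I would analyse how parts in the smaller sequence relate to parts in the larger one. The natural embedding sending a part $(\phi,\chi)$ of $\psi_i$ to $(\phi\vee\psi_{1-i},\chi)$ in $\psi_0\vee\psi_1$ (and the identity on $\b\phi$-parts) increases depth by exactly one. The hypothesis $m<n$ therefore supplies the unit of ``room'' needed to argue that each $\prec$-minimal class $\Phi$ in $\Pi^k((\b\phi,\psi_i),m)$ lifts to a class $\Phi^\dagger$ in $\Pi^k((\b\phi,\psi_0\vee\psi_1),n)$ sharing the same $\sim$-equivalence and templated by an extension of $\vartheta_\Phi$. For each predicate $\P^a_j$ introduced by $F_{(\b\phi,\psi_i),m}$ in place of $\Phi$, the target assignment is then defined by
\[
    g(\P^a_j)(z_1,\dots,z_a) := F_{(\b\phi,\psi_0\vee\psi_1),n}(\vartheta_{\Phi^\dagger}(z_1,\dots,z_a)),
\]
after suitable reindexing of $\vartheta_{\Phi^\dagger}$ to match the arity of $\vartheta_\Phi$. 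A structural induction on $\psi_i$ (and each $\phi_j$) then yields $A_m[g]=(\b\phi',\psi'_i)$: connective and quantifier cases follow from the parallel commutativity of the two $F$-functions at nodes of shallow depth, while the predicate-replacement base case follows directly from the choice of $g$.

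The main obstacle, as I see it, is the template compatibility step. The equivalence relation $E_\Phi$ on occurrences, and hence the arity of $\vartheta_\Phi$, is sensitive to the ambient set of formulas: $\Phi^\dagger$ may contain extra occurrences drawn from $\psi_{1-i}$ that refine $E_\Phi$. Verifying that $\vartheta_{\Phi^\dagger}$ can be coherently identified with $\vartheta_\Phi$ after reindexing, and that this identification commutes with the substitution $g$, requires careful bookkeeping of how the added context from the companion disjunct interacts with the iterated $\Pi^k$-construction. Once it is observed that additional occurrences can only refine $E$ rather than coarsen it, the reindexing is uniquely determined and the inductive verification goes through.
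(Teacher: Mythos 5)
Your overall strategy is the natural one — read off that $F_{(\b\phi,\psi_0\vee\psi_1),n}$ commutes with the outermost disjunction, build an explicit assignment on the fresh predicates introduced by $F_{(\b\phi,\psi_i),m}$, and check by structural induction that substitution recovers $(\b\phi',\psi'_i)$, with $m<n$ absorbing the one-level depth shift caused by the extra disjunct; the paper itself gives no argument beyond the remark that the lemma follows from the preceding development, so any honest proof has to carry out exactly this kind of analysis of the construction. The problem is that the step you yourself single out as the main obstacle is where the entire content of the lemma sits, and your resolution of it does not work as stated. The definition $g(\P^{a_j}_j)(z_1,\dots,z_{a_j}):=F_{(\b\phi,\psi_0\vee\psi_1),n}(\vartheta_{\Phi^\dagger}(z_1,\dots,z_{a_j}))$ is not even well formed: $F_{(\b\phi,\psi_0\vee\psi_1),n}$ is only defined on $\Gamma((\b\phi,\psi_0\vee\psi_1),n)$, a set of formulæ occurring as parts of the given sequence, and the open template is not in its domain. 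What you actually need is a uniformity claim: a single $\L^+$-formula $\theta(z_1,\dots,z_{a_j})$ with $\theta(s^\psi_1,\dots,s^\psi_{a_j})=F_{(\b\phi,\psi_0\vee\psi_1),n}(\psi)$ for \emph{every} $\psi$ in the class $\Phi$. Since substituting terms for the variables $z_c$ cannot alter a connective skeleton, this forces all these values to share one skeleton, with terms varying only according to $E_\Phi$.

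That is precisely what is not automatic. Members of $\Phi$ are in general not in $\Gamma_I((\b\phi,\psi_0\vee\psi_1),n)$ (the $n$-th approximation digs deeper), and whether a subformula $\chi$ lies in $\Gamma_I((\b\phi,\psi_0\vee\psi_1),n)$ is a property of the particular formula $\chi$: it holds as soon as \emph{some} part $(\cdot,\chi)$, possibly located inside the other disjunct $\psi_{1-i}$, is $\prec$-minimal in the fixed point. So a priori $F_{(\b\phi,\psi_0\vee\psi_1),n}$ could truncate at different positions inside two $\sim$-equivalent members of $\Phi$, or truncate strictly above a node at which $F_{(\b\phi,\psi_i),m}$ still preserves structure — and the structure-preserved region of the $m$-th approximation is not confined to ``shallow depth'': by the closure clauses it can reach depth $\lh{(\b\phi,\psi_i)}\cdot2^m$, cf.~\eqref{eqn:PiBounded}. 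Either failure leaves a connective or quantifier in your $A_m$ facing an atom in $(\b\phi',\psi'_i)$, and then no assignment exists at all. Excluding these scenarios is exactly where the clause $\chi_0\sim\chi_1$ and the budget $n-d(\phi_0,\chi_0)$ in the definition of $\Pi^{j+1}$, together with $m<n$, must be put to work, and your argument never engages them: the observation that additional occurrences ``only refine $E$'' concerns arity bookkeeping (and in fact gives $\vartheta_{\Phi^\dagger}$ \emph{more} argument places than $\P^{a_j}_j$ has, so the proposed reindexing already presupposes the uniformity claim) and does not touch the truncation/skeleton problem. Until that is proved, the ``inductive verification'' in your final step does not go through.
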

Similarly we obtain:
\begin{lemma}\label{lem:KKLnot}
    If $(\b\phi',\lnot\psi')$ is the $n$-th approximation of $(\b\phi,\lnot\psi)$ and $m<n$ then the $m$-th approximation of $(\b\phi,\psi)$ is an approximation of $(\b\phi',\psi')$.
\end{lemma}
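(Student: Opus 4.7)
The plan is to adapt the argument of Lemma~\ref{lem:KKLvee} with only cosmetic changes; the negation case is in fact simpler than the disjunction case as it introduces no branching. The key observation is that there is a natural embedding $\iota\colon\Pi(\b\phi,\psi)\to\Pi(\b\phi,\lnot\psi)$ defined as follows: a part $(\phi_*,\chi)$ arising from an entry of $\b\phi$ is sent to itself, while a part arising from the final component $\psi$ is sent to $(\lnot\phi_*,\chi)$, with the outer context acquiring an extra negation. Under $\iota$ the $\prec$-relation and the $\sim$-relation on the inner formulae are preserved (the inner formulae $\chi$ are literally unchanged), and the depth of a part increases by at most one.

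First I would verify that $\iota$ carries $\Pi^{0}(\b\phi,\psi,m)$ into $\Pi^{0}(\b\phi,\lnot\psi,n)$: a part of depth at most $m$ in the source is sent to a part of depth at most $m+1\le n$ in the target. Then, by induction on $k$, one shows $\iota[\Pi^{k}(\b\phi,\psi,m)]\subseteq\Pi^{k}(\b\phi,\lnot\psi,n)$. The inductive clause survives because $\prec$ and $\sim$ are preserved, while the two depth differences appearing in the recursive definition of $\Pi^{k+1}$ are unaffected by $\iota$, the uniform increment of $+1$ cancelling on both sides of the inequality $d(\phi,\chi)-d(\phi_1,\chi_1)\le n-d(\phi_0,\chi_0)$. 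Passing to the fixed point on each side, every $\prec$-minimal part of the stable set for $(\b\phi,\psi,m)$ is mapped by $\iota$ into the stable set for $(\b\phi,\lnot\psi,n)$, though possibly not to a $\prec$-minimal member.

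From this I would construct the required assignment $g$ on the fresh predicates appearing in $(\b\phi',\psi')$: if $\iota$ sends a minimal part on the left to a minimal part on the right, let $g$ identify the two templates, which share the same $\sim$-class and so by Lemma~\ref{lem:Template} the same template up to renaming of the indices; if $\iota$ sends it strictly below a $\prec$-smaller minimal part on the right, let $g$ expand the coarser predicate of the $n$-th approximation using the refinement produced by $F_{(\b\phi,\psi),m}$ on the intervening sub-formulae. The main obstacle is the bookkeeping at this last step, where one must check that $g$ is well-defined across $\sim$-classes that may be split between $\Gamma_I$ and $\Gamma\setminus\Gamma_I$ differently on the two sides. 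This is resolved by observing that the $E_\Phi$-equivalence used to extract templates depends only on the $\sim$-class of the inner formula and its occurrence pattern, not on which tuple the part inhabits, so the templates commute with $\iota$ up to the substitution encoded by $g$; this is precisely the witness showing that the $m$-th approximation of $(\b\phi,\psi)$ approximates $(\b\phi',\psi')$.
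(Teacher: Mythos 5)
Your comparison of the two part-structures is sound: the map $\iota$ that fixes parts coming from $\b\phi$ and prefixes the outer context of parts coming from $\psi$ with a negation does preserve $\prec$ and leaves the inner formulæ (hence $\sim$) untouched, and the depth bookkeeping goes through — the left difference $d(\phi,\chi)-d(\phi_1,\chi_1)$ is unchanged while the right-hand side drops by at most $1$, which is absorbed by $m+1\le n$ (it is this use of $m<n$, not a cancellation "on both sides", that saves the inequality). So $\iota$ maps the stable set for $((\b\phi,\psi),m)$ into the stable set for $((\b\phi,\lnot\psi),n)$, exactly as you say. This is in the spirit of the paper, which states the lemma as following, like lemma~\ref{lem:KKLvee}, from lemmata~\ref{lem:1}, \ref{lem:1.5} and \ref{lem:1.7} without detailed proof.

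The genuine gap is in the last step, where the actual content of the lemma sits. The assignment $g$ witnessing the conclusion must act on the fresh predicates of the \emph{$m$-th} approximation (these are the coarse ones, since $m<n$) and send each such predicate to a formula carrying the \emph{finer} structure computed by $F_{(\b\phi,\lnot\psi),n}$; your prescription "expand the coarser predicate of the $n$-th approximation using the refinement produced by $F_{(\b\phi,\psi),m}$" has both directions reversed, and "strictly below a $\prec$-smaller minimal part" does not describe the relevant situation (the image of a left-minimal part is $\prec$-\emph{above} the right-minimal parts, i.e.\ the level-$n$ map keeps commuting past it). More importantly, even with the directions fixed, the well-definedness of $g$ is asserted rather than proved: one must show that $F_{(\b\phi,\lnot\psi),n}$ acts template-wise on each $\sim$-class $\Phi$ of level-$m$ minimal formulæ, i.e.\ that there is a single $\L^+$-formula $\Theta_\Phi(z_1,\dots,z_{l_\Phi})$ with $\Theta_\Phi(s^\chi_1,\dots,s^\chi_{l_\Phi})=F_{(\b\phi,\lnot\psi),n}(\chi)$ for every $\chi\in\Phi$. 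This requires checking (i) that the level-$n$ recursion never bottoms out strictly above a level-$m$ minimal part (membership in $\Gamma_I((\b\phi,\lnot\psi),n)$ is an existential condition over parts, so this is not automatic from your inclusion of stable sets), and (ii) that for all members of $\Phi$ the recursion bottoms out at corresponding positions with the same fresh predicates and with slot structures compatible with $\vartheta_\Phi$ — the level-$n$ $\sim$-classes and their $E$-relations are computed over a different set of formulæ, so the templates on the two sides may identify different occurrences and "identifying the two templates up to renaming" needs an argument (one has to allow $g$ to duplicate or permute slot variables and verify this suffices). Your closing sentence claims exactly this uniformity but gives no proof of it; without it the lemma is not established.
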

\begin{lemma}\label{lem:KKLforall}
    If $(\b\phi',\forall x\phi')$ is the $n$-th approximation of $(\b\phi,\forall x\phi)$ and $m<n$ then for every $a<\omega$ the $m$-th approximation of $(\b\phi,\phi[\bar a/x])$ is an approximation of $(\b\phi',\phi'[\bar a/x])$.
\end{lemma}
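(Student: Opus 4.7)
The plan is to prove this in the same fashion as Lemmas \ref{lem:KKLvee} and \ref{lem:KKLnot}, by constructing an explicit assignment $g$ witnessing that the $m$-th approximation of $(\b\phi,\phi[\bar a/x])$ is an approximation of $(\b\phi',\phi'[\bar a/x])$. The novel wrinkle compared to the connective cases is that we must handle the term substitution $[\bar a/x]$ alongside the stripping of the outer quantifier.

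First I would observe that since $(\b\phi',\forall x\phi')$ is the $n$-th approximation of $(\b\phi,\forall x\phi)$, there exists an assignment $h$ with $(\b\phi',\forall x\phi')[h]=(\b\phi,\forall x\phi)$. Since substituting the closed term $\bar a$ for $x$ commutes with the predicate substitution $h$, it follows that $(\b\phi',\phi'[\bar a/x])[h]=(\b\phi,\phi[\bar a/x])$, so $(\b\phi',\phi'[\bar a/x])$ is already an approximation of $(\b\phi,\phi[\bar a/x])$. Both it and the $m$-th approximation sit above $(\b\phi,\phi[\bar a/x])$ in the approximation ordering, and the task reduces to showing the latter is at least as coarse.

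The core argument relates parts in $\Pi^k((\b\phi,\forall x\phi),n)$ to parts in the corresponding fixed point $\Pi^{k'}((\b\phi,\phi[\bar a/x]),m)$. Each $\prec$-minimal part $(\psi,\chi)$ giving rise to a $\P$-symbol in $(\b\phi',\forall x\phi')$ corresponds, upon stripping the outer quantifier and applying $[\bar a/x]$, to a part of $(\b\phi,\phi[\bar a/x])$ whose depth has decreased by exactly one. Since $m\le n-1$, the depth bound from \eqref{eqn:PiBounded} translates directly, and a routine induction on the stages of $\Pi^{m+1}(\cdot,\cdot)$ shows that the corresponding translated part lies in $\Pi^{k'}((\b\phi,\phi[\bar a/x]),m)$. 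The assignment $g$ is then obtained by mapping each $\P^i_j$ occurring in $(\b\phi',\phi'[\bar a/x])$ to the predicate symbol assigned by $F_{(\b\phi,\phi[\bar a/x]),m}$ to the corresponding $\sim$-class, or to the template-based replacement from Lemma \ref{lem:Template} when the part fails to be $\prec$-minimal in the new hierarchy.

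The main obstacle is that the substitution $[\bar a/x]$ can alter the structure of templates and $\sim$-equivalence classes: subformulas that were distinct in $\forall x\phi$ (because of different occurrences of $x$) may coalesce once $x$ becomes $\bar a$, and similarly $\prec$-minimal parts in the original hierarchy may fail to be $\prec$-minimal after substitution. The delicate verification is that the definition of $g$ remains well-defined despite such collapses, i.e., that whenever two parts merge under substitution, the predicate symbols assigned to them in $(\b\phi',\forall x\phi')$ are compatible. This is where the careful formulation of $\prec$ as refining the one used in \cite{KKL81}, together with the interaction between $\sim$ and the depth condition in the definition of $\Pi^{m+1}$, must be exploited.
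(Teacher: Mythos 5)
Your opening reduction is fine (and matches the only step the paper itself records, since the paper states this lemma without proof as a consequence of lemmas~\ref{lem:1}--\ref{lem:1.7}): $(\b\phi',\phi'[\bar a/x])[h]=(\b\phi,\phi[\bar a/x])$, so everything hinges on comparing the two canonical approximations. But the combinatorial core of your sketch points in the wrong direction. To get that the $m$-th approximation $A$ of $(\b\phi,\phi[\bar a/x])$ approximates $B=(\b\phi',\phi'[\bar a/x])$ you must show that wherever $A$ retains logical structure, $B$ retains the same structure; equivalently, the fixed point $\Pi^{k'}((\b\phi,\phi[\bar a/x]),m)$ embeds, under the position shift induced by stripping the quantifier, into $\Pi^{k}((\b\phi,\forall x\phi),n)$, so that the $\P$-symbols of $A$ sit at or above the replacement frontier of $B$. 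You instead propose to push the $\prec$-minimal parts of the $n$-hierarchy into the $m$-hierarchy (``the corresponding translated part lies in $\Pi^{k'}((\b\phi,\phi[\bar a/x]),m)$''). That inclusion is neither what is needed nor true in general: already for $m<n-1$, parts of depth strictly between $m$ and $n$ having no shallow $\sim$-copy belong to $\Pi^{0}((\b\phi,\forall x\phi),n)$ but to no stage of the $m$-hierarchy, so $B$'s frontier lies strictly below the $m$-fixed point. (Also, the depth drop is one only for parts of the last component; parts inside $\b\phi$ keep their depth.)

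The second gap is that the well-definedness of $g$ --- that all $A$-minimal positions carrying the same symbol $\P_j$ are matched in $B$ by instances $G_j(\vec s)$ of a single $\L^+$-formula, with $\vec s$ the template terms of lemma~\ref{lem:Template} --- is exactly the content of the lemma, and your sketch defers it (``the delicate verification is \dots''). Moreover, the obstacle you single out, collapse of $\sim$-classes under $[\bar a/x]$, does not actually arise: by construction of $\bar\chi$ both free variables and the terms built over them are collapsed to $w$, so $\overline{\chi[\bar a/x]}=\overline{\chi}$ and templates and $\sim$-classes are invariant under substituting the closed term $\bar a$ for $x$. The genuine difficulty is elsewhere: one must check that $F_{(\b\phi,\forall x\phi),n}$ acts template-uniformly on the $\sim$-class of the formula occupying each $A$-minimal position (e.g.\ that one cannot have $\chi'_p\sim\chi'_q$ at two such positions with one replaced by a predicate symbol and the other decomposed), and this is where the depth-control clause and the ``$\exists(\phi_0,\chi_0)\in\Pi^{0}(\b\phi,n)$'' clause in the definition of $\Pi^{m+1}$, together with the bound \eqref{eqn:PiBounded}, have to be used. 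As it stands the proposal identifies the right shape of argument (explicit construction of $g$, as for lemma~\ref{lem:KKLvee}) but both the frontier comparison and the uniformity argument are missing or misdirected.
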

\subsection{Approximating sequents}\label{sec:ApproxSequents}
We begin by noting that all the definitions and results of the previous section can be formalised and proved within $\ID+\exp$.
Thus we fix the following formal notation.
\begin{enumerate}
  \item Gödel coding is expanded to sequences by letting $\gn{\b\phi}$ denote the term $(\gn{\phi_0},\ldots,\gn{\phi_m})$ if $\b\phi=(\phi_0,\ldots,\phi_m)$.
  \item  $(r)_i=s$ if $r$ encodes a sequence of length $k\ge i$ and $s$ is the $i$-th element of the sequence.
  \item If $\b s=(s_0,\ldots,s_m)$ and $\b t=(t_0,\ldots,t_n)$ are two sequences $\b s\concat \b t$ expresses the sequence $(s_0,\ldots, s_m,t_0,\ldots,t_n)$.
  In the case $m=n$ we introduce the following further abbreviations.
  \begin{enumerate}
    \item $\b s\c=\b t$ abbreviates $\bigwedge_{i\le m}(s_i\c=t_i)$;
    \item $\b s[g]$ abbreviates the sequence of terms $(s_0[g],\ldots,s_m[g])$;
    \item $\subdot F_{ r,u}(\b s)$ abbreviates the sequence of terms $(\subdot F_{ r,u}(s_0),\ldots,\subdot F_{ r,u}(s_m))$;
    \item $\subdot d(\b s)\le u$ abbreviates the formula $\bigwedge_{i\le m}\subdot d(s_i)\le u$.
  \end{enumerate}
  \item $s[g]\c=t$ expresses that either $g$ is not an assignment and $s\c=t$ or $g$ is an assignment and $t$ is the result of replacing within the $\L^+$ formula $s$, each occurrence of the predicate symbol $\P^i_j$ by $g(\P^i_j)$ if defined, otherwise by $\epsilon$.
  \item $\subdot F_{r,k}(s)=t$ expresses that there exists a sequence $\b\phi$ and $\psi\in\Gamma(\b\phi,k)$ such that $r=\gn{\b\phi}$, $s=\gn{\psi}$ and $t=\gn{F_{\b\phi,k}(\psi)}$; if there is no sequence of $\L_\T$-formulæ $\b\phi$ such that $r = \gn {\b\phi}$ then $s=t$.
\end{enumerate}
Note that the last point above expands to apply to complex equations involving multiple occurrences of sequences. 
So, for instance, $\subdot F_{r,u}(\b s)[g]=\subdot F_{r',u'}(\b t)$ is shorthand for the formula $\bigwedge_{i\le m}\subdot F_{r,u}(s_i)[g]=\subdot F_{r',u'}(t_i)$.

Collecting together the results of the previous section we have:
\begin{lemma}\label{lem:Formal1}
The following sequents are derivable in $\IDexp$.
\begin{align*}
    \emptyset&\Seq (x\dvee y)[z]\c=(x[z]\dvee y[z]),\\
    \emptyset&\Seq (\dnot x)[z]\c=\dnot (x[z]),\\
    \emptyset&\Seq (\dforall xy)[z]\c=\dforall x(y[z]),\\
    \emptyset&\Seq (y(x/w))[z]\c=(y[z])(x/w),\\
    (x)_i\c=y \dvee z 
        &\Seq \subdot F_{x,w+1}(y\dvee z)\c=
        \subdot F_{x,w+1}(y)\dvee \subdot F_{x,w+1}(z),\\
    (x)_i\c=\dnot y 
        &\Seq \subdot F_{x,w+1}(\dnot y)\c=
        \dnot \subdot F_{x,w+1}(y),\\
    (x)_i\c=\dforall y z 
        &\Seq \subdot F_{x,w+1}(\dforall y z)\c=
        \dforall y(\subdot F_{x,w+1}(z)),\\
    \emptyset&\Seq \subdot F_{x,w}(y_0\concat y_1\concat y_2)\c=
        \subdot F_{x,w}(y_0\concat y_2\concat y_1),\\
    \emptyset&\Seq \subdot d(\subdot F_{x,z}(\b s))
        \le \subdot{lh}(x)\cdot 2^z.
\end{align*}
\end{lemma}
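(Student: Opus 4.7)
The plan is to handle the nine sequents in three groups corresponding to three kinds of claim: syntactic identities about substitution, commutation clauses for the recursive definition of $\subdot F$, and the global depth bound.

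The first four sequents are purely formal identities about the syntactic operations $[g]$ and $\sub$. Since both are defined in $\IDexp$ by recursion on the build-up of the code, each of these sequents is obtained by unfolding a single clause in the defining equation: for instance $(x \dvee y)[z] \c= (x[z] \dvee y[z])$ is immediate once one has verified that $[g]$ commutes with the code constructors, and the last of the four is the usual lemma that substitutions commute when their bound variables are distinct. These are routine and present no difficulty beyond checking that the coding conventions are preserved under the relevant case distinctions.

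Sequents (5)--(7) are the more delicate commutation clauses for $\subdot F_{x,w+1}$. Here the task is to show that if $(x)_i \c= y \dvee z$ (respectively $\dnot y$ or $\dforall y z$) then the formula $(x)_i$ belongs to $\Gamma(\b\phi,w+1) \setminus \Gamma_I(\b\phi,w+1)$, where $\b\phi$ is the sequence coded by $x$; once this is in hand, the recursive clause of $F$ for non-$\Gamma_I$ members gives the desired equation directly. That $(x)_i \in \Gamma(\b\phi,w+1)$ is immediate from $(\epsilon,(x)_i) \in \Pi^0(\b\phi,w+1)$. For the non-membership in $\Gamma_I$ one must check every pair $(\phi,(x)_i) \in \Pi^k(\b\phi,w+1)$ and exhibit a $\prec$-smaller pair also in $\Pi^k$; the deeper parts pointing at the immediate subformulas ($y$ or $z$) witness this because $w+1 \geq 1$. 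This case analysis is the real content of the sequents and should be carried out along the three connectives.

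Sequent (8) is the permutation invariance of $\subdot F_{x,w}$ on sequences. The informal content is that $\Pi(\b\phi)$, and hence all the derived objects $\Pi^m$, $\Gamma$, $\Gamma_I$, $\Gamma_I^\sim$, the templates $\vartheta_\Phi$, and the resulting function, depend only on the multi-set of entries of $\b\phi$. Provided the choices made inside the formalisation (notably the enumeration of $\Gamma_I^\sim$) have been fixed in a way that does not use the input order, the sequent follows; if the chosen formalisation does depend on order, the sequent is proved by matching up the two enumerations explicitly.

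The last sequent (9) is the formalisation of Lemma~\ref{lem:1.5} and is where the main work lies. The informal bound (\ref{eqn:PiBounded}) says that any $\prec$-chain inside $\Pi^m(\b\phi,n)$ has length at most $\lh{\b\phi} \cdot 2^n$; inside $\IDexp$ this combinatorial fact is proved by associating to each chain element its template $\vartheta_\Phi \in \Gamma_I^\sim$, noting that only $\lh{\b\phi}\cdot 2^n$ templates can appear as possible values, and using pigeon-hole together with the depth regulation in the defining clause of $\Pi^{m+1}$. The bound then propagates through the definition of $F$. The hard part, and the main obstacle to the whole lemma, is arranging the formalisation so that this pigeon-hole-style counting is available in $\IDexp$ without appeal to stronger induction; once the combinatorial bound is in place, the remaining work is bookkeeping.
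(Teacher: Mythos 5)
Your outline is correct and follows essentially the same route as the paper, which offers no separate proof of this lemma beyond the remark that the constructions and results of section~4 (the bound \eqref{eqn:PiBounded} and lemmata~\ref{lem:1}--\ref{lem:1.7}) can be formalised in $\IDexp$: your verification --- routine unfolding of the defining clauses for the four substitution identities, non-minimality in $\Pi^{k}$ of every occurrence-pair of $(x)_i$ (using the depth-$0$ part as budget witness, which is where $w+1\ge 1$ enters) so that $F$ commutes with the outer connective, order-invariance of the construction (with a canonically chosen enumeration of $\Gamma_I^\sim$) for the permutation sequent, and the chain/$\sim$-class counting behind the final depth bound --- is exactly that formalisation. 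One cosmetic remark: in the counting for the last sequent the relevant $\sim$-classes are those represented among the parts in $\Pi^{0}(\b\phi,n)$ rather than the templates of $\Gamma_I^\sim$, but this does not affect your bound or the structure of your argument.
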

\begin{lemma}\label{lem:Formal2}
There is a term $g$ with variables $w$, $x$, $y$ and $z$ such that the following sequents are truth-free derivable in $\IDexp$.
\begin{align*}
    \emptyset&\Seq\subdot d(g)\le \subdot{lh}(x)\cdot 2^z,\\
    y<z,w\c=x&\Seq \subdot F_{w,y}(u)[g]\c=\subdot F_{x,z}(u),\\
    y<z,x\c= x'\concat(x_0\dvee x_1) ,w\c=x'\concat x_i
        &\Seq\subdot F_{w,y}(w)[g]\c=
            \subdot F_{x,z}(w),\\
    y<z,x\c=x'\concat(\dnot x_0),w\c=x'\concat x_0
        &\Seq\subdot F_{w,y}(w)[g]  \c=\subdot F_{x,z}(w),\\
    y<z,x\c=x'\concat(\dforall x_0 x_1),w\c=x'\concat\subn&(x_1,x_2,u)\\
        &\Seq\subdot F_{w,y}(w)[g]\c=
            \subdot F_{x,z}(x')\concat \subn(\subdot F_{x,z}(x_2),x_1,u),\\
    w\c=x\concat w',\forall u(\subdot d(\subdot F_{w,y}(u))\le z)
        &\Seq\subdot F_{w,y}(x)[g]\c=\subdot F_{x,z}(x).
\end{align*}
\end{lemma}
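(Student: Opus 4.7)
The plan is to give a single term $g$ whose value, decoded as an assignment of predicates in $\mathcal P$ to $\L^+$-formulæ, uniformly witnesses all of the approximation relations captured by Lemmas~\ref{lem:1}, \ref{lem:KKLvee}, \ref{lem:KKLnot} and \ref{lem:KKLforall}. First I would formalise in $\IDexp$ the definition of $F_{\b\phi,n}$ from §\ref{sec:ApproxFormulae}: for each equivalence class $\Phi_j\in\Gamma_I^\sim(\b\phi,n)$ the function assigns a fresh predicate $\P_j^{a_j}(s_1^\psi,\dots,s_{a_j}^\psi)$ to each $\psi\in\Phi_j$ and commutes with the external connective on formulæ in $\Gamma(\b\phi,n)\setminus\Gamma_I(\b\phi,n)$. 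To construct $g$, we match the predicates introduced by $F_{w,y}$ with their counterparts produced by $F_{x,z}$: whenever $\P_k^i$ is the predicate that $F_{w,y}$ assigns to a class with representative $\psi$, we set $g(\P_k^i)$ to be the $\L^+$-formula $\subdot F_{x,z}(\psi)$ (or the substitution instance dictated by the decomposition hypotheses in sequents~3--5).

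The depth bound then follows immediately from the last sequent of Lemma~\ref{lem:Formal1}, since every formula in the range of $g$ is of the form $\subdot F_{x,z}(\cdot)$, hence has depth at most $\subdot{lh}(x)\cdot 2^z$. The remaining sequents I would verify by case analysis on the relationship between $w$ and $x$. Sequent~2, under the hypothesis $w\c=x$ with $y<z$, is the pure refinement case and reduces to the constructive content of Lemma~\ref{lem:1}: the assignment $g$ realises the natural inclusion of $\Gamma(x,y)$-data into $\Gamma(x,z)$-data. Sequents~3--5 correspond to the decomposition lemmas: for instance, under $x\c=x'\concat(x_0\dvee x_1)$ and $w\c=x'\concat x_i$, every template used along $\prec$-chains in the computation of $F_{w,y}$ is traced to a template at strictly greater $\prec$-depth in $F_{x,z}$, and $g$ realises the required substitution. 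Sequent~5 additionally invokes commutation of $\subn$ with $g$, which is an instance of the fourth sequent of Lemma~\ref{lem:Formal1}. Sequent~6, whose hypothesis guarantees that $F_{w,y}$ stays within the depth budget of $F_{x,z}$, follows by induction on the external structure of $x$ using the matching $g$ directly.

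The principal obstacle is constructing $g$ as a \emph{single} term that behaves correctly uniformly in the four parameters simultaneously. This requires fixing a canonical enumeration of the $\sim$-classes in $\Gamma_I^\sim$---for instance lexicographic in the Gödel codes of their templates---so that the matching is definable as a primitive recursive function of $w$, $x$, $y$ and $z$, and then verifying by formalised structural induction in $\IDexp$ that each of the six sequents holds. Once this bookkeeping is fixed, the individual verifications reduce to repeated applications of the identities already collected in Lemma~\ref{lem:Formal1} together with the constructive content of the approximation lemmas of §\ref{sec:ApproxFormulae}.
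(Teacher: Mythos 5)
Your proposal is correct and follows essentially the route the paper intends: the paper gives no explicit proof of this lemma, only the remark (immediately after its statement) that its sequents are the $\IDexp$-formalisations of lemmata~\ref{lem:1}, \ref{lem:1.5}, \ref{lem:1.7}, \ref{lem:KKLvee}, \ref{lem:KKLnot} and~\ref{lem:KKLforall}, which is exactly what you elaborate by defining $g$ as the uniformly (primitive recursively) definable assignment matching the predicates introduced by $\subdot F_{w,y}$ with the corresponding $\subdot F_{x,z}$-images and verifying each sequent via the corresponding approximation lemma together with lemma~\ref{lem:Formal1}. The only caution is that $g(\P^i_j)$ must be taken as the $F_{x,z}$-image of the \emph{template} of the relevant $\sim$-class (an $i$-ary formula), not of a particular closed representative, but your bookkeeping remarks make clear this is what is meant.
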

The first sequent of lemma~\ref{lem:Formal2} formalises lemma~\ref{lem:1.5}, the second lemma~\ref{lem:1}, the third lemma~\ref{lem:KKLvee}, the penultimate line formalises lemma~\ref{lem:KKLforall}, expressing that the $y$-th approximation to $(\b\phi,\phi[a/x_2])$ can be viewed as an approximation of the $z$-th approximation to $(\b\phi,\forall x\phi)$ whenever $y<z$, and the final line combines lemmata~\ref{lem:1.7} and~\ref{lem:1.5}.

Thus tying in approximations with derivations we have:
\begin{lemma}\label{lem:Approx}
    Let $\Gamma$, $\Delta$ be sets consisting of arithmetical formulæ, and $\b\phi$, $\b\psi$ be sequences of terms.
    If $\Gamma,\T\b\phi\Seq\Delta,\T\b\psi$ is derivable then for every term $g$,
    \[
        \Gamma,\T \b\phi[g]\Seq\Delta,\T \b\psi[g]
    \]
    is derivable with the same truth bound.
    Moreover, if the first derivation contains no $\T$-cuts, neither does the second.
\end{lemma}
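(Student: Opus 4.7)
The plan is to argue by induction on the height of the given derivation of $\Gamma,\T\b\phi\Seq\Delta,\T\b\psi$, transforming it into a derivation of $\Gamma,\T\b\phi[g]\Seq\Delta,\T\b\psi[g]$ via a uniform syntactic substitution that replaces every term $r$ occurring in a position of the form $\T r$ (throughout the derivation, not merely at the endsequent) by $r[g]$, while leaving arithmetical formulæ untouched. At each rule instance one verifies local correctness of the transformed rule, appealing at most to short auxiliary arithmetical derivations supplied by the commutation identities of lemma~\ref{lem:Formal1}.

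For the base cases, the axioms from $\theory S$ and the sentence axiom for $\T$ descend unchanged since they involve no non-trivial operations on the truth-predicate argument; the identity axiom $\Gamma,r\c=s,\T r\Seq\Delta,\T s$ becomes $\Gamma,r\c=s,\T(r[g])\Seq\Delta,\T(s[g])$, which is re-derivable from the corresponding axiom for $r[g]$ and $s[g]$ together with the (arithmetically provable) implication $r\c=s\rightarrow r[g]\c=s[g]$. For the inductive step, all arithmetical rules and (Cut$_\L$) act on $\L$-formulæ and so descend directly by applying the same rule to the transformed premises. The cuts on truth atoms, (Cut$_\T$) and (Cut$^k_\T$), translate into the same cut on the substituted formula $\T(\phi[g])$; the bounding side-premise in the second case is re-derived for $\phi[g]$ by routine arithmetical reasoning within $\Seq^*$.

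For the truth rules, the key observation is that $[g]$ commutes with every code-level connective and quantifier. This is exactly the content of the first four identities of lemma~\ref{lem:Formal1}: $(x\dvee y)[z]\c=x[z]\dvee y[z]$, $(\dnot x)[z]\c=\dnot(x[z])$, $(\dforall xy)[z]\c=\dforall x(y[z])$, and $(y(x/w))[z]\c=(y[z])(x/w)$. Thus, for example, an instance of $(\vee_\T\text{L})$ with principal formula $\T\psi$ and side formula $\psi\c=\psi_0\dvee\psi_1$ translates to the same rule applied to $\T(\psi[g])$ with side formula $\psi[g]\c=\psi_0[g]\dvee\psi_1[g]$, the latter being derived arithmetically from $\psi\c=\psi_0\dvee\psi_1$ by the first commutation law. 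The $(\lnot_\T)$ and $(=_\T)$ rules are handled analogously.

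The most delicate case is the quantifier rule $(\forall_\T\text{L})$, whose premise $\Gamma,\T(\psi_0[t/s])\Seq\Delta$ contains a term substitution nested inside the truth predicate: here one must rely on $(\psi_0[t/s])[g]\c=(\psi_0[g])[t/s]$, again from lemma~\ref{lem:Formal1}, to align the transformed premise with the transformed conclusion. The eigenvariable condition of $(\forall_\T\text{R})$ remains satisfied because $[g]$ acts on predicate symbols from $\mathcal P$ rather than on individual variables. Since every truth rule in the original derivation corresponds to exactly one truth rule in the transformed derivation (with at most some additional arithmetical padding that contributes no truth rules and no new $\T$-cuts), both the truth bound and the absence of $\T$-cuts are preserved, yielding the stated conclusions.
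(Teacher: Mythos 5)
Your overall plan---induction on the height of the derivation, pushing the substitution $[g]$ through every truth-atom and using the commutation identities of lemma~\ref{lem:Formal1} at the truth rules---is the natural argument the paper has in mind when it calls the lemma ``not difficult''. But two of your local steps do not go through as claimed. First, the sentencehood axiom $\Gamma,\T r\Seq\Sent(r),\Delta$ does not ``descend unchanged'': your transformation turns it into $\Gamma,\T(r[g])\Seq\Sent(r),\Delta$, which is not an instance of the axiom (that would have $\Sent(r[g])$ in the succedent), and the implication you would need runs the wrong way. $\Sent(r[g])\rightarrow\Sent(r)$ is not provable, because replacing an unassigned predicate $\P^i_j(s_1,\dots,s_i)$ by $\epsilon$ can erase free variables occurring in the terms $s_k$, so $r[g]$ may code a sentence while $r$ does not; for suitable $r$ and $g$ the transformed sequent is not derivable at all. (At the $(\lnot_\T)$ rules you need the benign direction $\Sent(\psi)\rightarrow\Sent(\psi[g])$, which does hold for assignments, but this asymmetry is exactly what your ``handled analogously'' hides.) You must either restrict the substitution terms $g$ or explain explicitly how the $\Sent$-side formulas introduced by this axiom are traced through the transformation.

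Second, your treatment of (Cut$^k_\T$) is incorrect: the bounding premise for the substituted cut formula cannot be ``re-derived by routine arithmetical reasoning''. From $\Gamma,\Sent\phi\Seq^*\subdot d(\phi)\le\bar k$ no bound on $\subdot d(\phi[g])$ follows, since substituting the formulas assigned by $g$ can increase logical depth by an arbitrary amount depending on $g$ (which in this lemma is an arbitrary term, with no provable depth bound available). This is precisely why the paper proves lemma~\ref{lem:Approx2} separately: there one additionally assumes a truth-free derivation of $\Gamma\Seq\subdot d(g)<\bar k$ and pays for the substitution by raising the truth rank from $r$ to $r+k$. Taken at face value, your argument would render lemma~\ref{lem:Approx2} redundant. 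The proof you should give covers derivations without $\T$-cuts (as the ``moreover'' clause and the paper's follow-up remark indicate) together with the unrestricted (Cut$_\T$) of $\CT$, which does carry over by simply re-applying the cut to the transformed premises; bounded $\T$-cuts must be left to lemma~\ref{lem:Approx2}.
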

The lemma is not difficult to prove. However, we require a more general version that applies also to derivations featuring $\T$-cuts. The next lemma achieves this.
\begin{lemma}\label{lem:Approx2}
    Let $\Gamma$, $\Delta$, $\b\phi$ and $\b\psi$ be as in the statement of the previous lemma.
    If the sequents $\Gamma,\T\b\phi\Seq\Delta,\T\b\psi$ and $\Gamma\Seq \subdot d(g)<\bar k$ are derivable with truth ranks $(a,r)$ and $(0,0)$ respectively, the sequent 
\[
    \Gamma,\T\b\phi[g]\Seq\Delta,\T\b\psi[g]
\]
is derivable with truth rank $(a, r+k)$.
\end{lemma}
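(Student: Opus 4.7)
The plan is to proceed by induction on the height of the derivation of $\Gamma,\T\b\phi\Seq\Delta,\T\b\psi$, closely following the strategy of Lemma~\ref{lem:Approx} but carefully tracking how the substitution $[g]$ interacts with the bounded cut rule (Cut$^m_\T$). The guiding observation is that $[g]$ commutes with the term-level logical operators and with substitution (Lemma~\ref{lem:Formal1}), so every truth rule can be replayed after substitution. The only case that genuinely affects the rank is a cut on $\T\chi$ which becomes a cut on $\T(\chi[g])$: since the depth of $\chi[g]$ exceeds that of $\chi$ by at most $k$, the original side condition requiring depth at most $m$ must be replaced by one requiring depth at most $m+k$, which accounts for the increase of the truth rank from $r$ to $r+k$ while the truth depth $a$ is preserved.

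For axioms and arithmetical rules the verification is routine: arithmetical axioms and (Cut$_\L$) are unaffected by $[g]$, the axiom $\T r\Seq\Sent r$ is reused as its instance for $r[g]$, and the equality axiom $r\c=s,\T r\Seq \T s$ becomes the corresponding substituted axiom because $r\c=s\Seq^* r[g]\c=s[g]$ is truth-free derivable. For each truth rule I apply the induction hypothesis to the premise(s) and then reapply the same rule with parameters substituted under $[g]$: for instance, in the ($\vee_\T$R) case the induction hypothesis yields $\Gamma\Seq\Delta,\T\psi_0[g],\T\psi_1[g]$, and reapplying the rule introduces the principal side condition $\psi[g]\c=\psi_0[g]\dvee\psi_1[g]$, which is truth-free derivable from $\psi\c=\psi_0\dvee\psi_1$ by the distributivity entries of Lemma~\ref{lem:Formal1}. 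The cases for ($\lnot_\T$), ($\forall_\T$) and ($=_\T$) are entirely analogous. Since the replayed rule is the same as the original, neither the truth depth nor the truth rank changes in these steps.

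The interesting case is an application of (Cut$^m_\T$) on a cut formula $\T\chi$ with $m<r$. Applying the induction hypothesis to the two principal premises produces derivations of $\Gamma,\T\b\phi[g]\Seq\Delta,\T\b\psi[g],\T\chi[g]$ and $\Gamma,\T\b\phi[g],\T\chi[g]\Seq\Delta,\T\b\psi[g]$ of truth rank $(a',r+k)$ with $a'$ no greater than the original heights. To reconstruct the cut as a valid (Cut$^{m+k}_\T$) I need a truth-free derivation of $\Gamma,\Sent(\chi[g])\Seq^*\subdot d(\chi[g])\le\overline{m+k}$, which I obtain by composing the original side condition $\Gamma,\Sent\chi\Seq^*\subdot d(\chi)\le\bar m$, the hypothesis $\Gamma\Seq^*\subdot d(g)<\bar k$, and two elementary facts provable in $\IDexp$: that $\Sent(\chi[g])$ implies $\Sent\chi$ (because substitution for predicate symbols preserves the quantifier and variable structure) and that $\subdot d(\chi[g])\le \subdot d(\chi)+\subdot d(g)$.

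I expect the main obstacle to be verifying, uniformly in $\chi$ and $g$, that these two auxiliary arithmetical lemmata really remain within the truth-free fragment $\Seq^*$, so that the combined derivation is a legitimate witness to the side condition of (Cut$^{m+k}_\T$); all remaining bookkeeping reduces to the commutation properties already recorded in Lemmata~\ref{lem:Formal1} and~\ref{lem:Formal2}, and to the observation that each new cut instance has truth rank $m+k<r+k$, so that the outer truth rank of the resulting derivation is indeed $r+k$ as required.
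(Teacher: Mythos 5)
Your proposal is correct and follows essentially the same route as the paper: induction on the derivation with the bounded truth cut as the only substantive case, where the substituted cut formula $\chi[g]$ gets its depth bound raised by $k$ (via the commutation facts of lemma~\ref{lem:Formal1} and the hypothesis on $\subdot d(g)$), turning (Cut$^{l}_\T$) into (Cut$^{l+k}_\T$) and hence raising the truth rank from $r$ to $r+k$ while preserving the truth depth. The paper simply treats all remaining cases as routine, exactly as you do.
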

\begin{proof}
    The only non-trivial case is if the last rule is (Cut$^l_\T$) for some $l<r$. 
    So suppose $a=a'+1$ and we have the following derivation
    \begin{prooftree}
    \AxiomC{$\Gamma,\T\b\phi,\T\chi\Seq\Delta,\T\b\psi$}
    \AxiomC{$\Gamma,\T\b\phi\Seq\Delta,\T\chi,\T\b\psi$}
    \AxiomC{$\Gamma\Seq \subdot d(\chi)\le \bar l$}
    \RightLabel{(Cut$^l_\T$)}
    \TrinaryInfC{$\Gamma,\T\b\phi\Seq\Delta,\T\b\psi$}
\end{prooftree}  
with the two left-most premises derivable with truth rank $(a',r)$ and the right-most with rank $(0,0)$. 
    By the induction hypothesis, the sequents $\Gamma,\T\b\phi[g],\T\chi[g]\Seq\Delta,\T\b\psi[g]$ and $\Gamma,\T\b\phi[g]\Seq\Delta,\T\chi[g],\T\b\psi[g]$ are both derivable with rank $(a',r+k)$. 
    Since the sequent $\Gamma\Seq \subdot d(g)\le\bar k$ is derivable with rank $(0,0)$, so is 
    \[
        \Gamma\Seq \subdot d(\chi[g])\le \bar l+\bar k,
    \]
     whence the rule (Cut$^{l+k}_\T$) yields the desired sequent.
\end{proof}

\subsection{Approximating derivations}\label{sec:ApproxDeriv}

All that remains is to replace derivations in $\CT$ by approximations with bounded depth. 
Given a sequent $\Gamma,\T\b s\Seq\Delta,\T\b t$, its {\em $u$-th approximation} is the sequent 
    $\Gamma,\T(\subdot F_{\b s\concat \b t,\bar u}\b s)
        \Seq\Delta,\T(\subdot F_{\b s\concat \b t,\bar u}\b t)$.
Let $H$ be the function
\[
    H(k,n)={n\cdot2^{k}}.
\]
By lemma~\ref{lem:1.5} the $k$-th approximation of $\b\phi$ has depth at most $H(k,\lh{\b\phi})$.

The following lemmas hold for arbitrary derivations in $\CT^*[\theory S]$.

\begin{lemma}\label{lem:CT*forall}
    Suppose $a,r,m,n,k<\omega$, $\Gamma$ and $\Delta$ are finite sets of $\L$-formulæ, $\b\phi$ and $\b\psi$ are sequences of terms and $\psi$ is a term, none of which contain $x$ free and such that $lh(\b\phi)+lh(\b\psi)=n$. 
    If the $k$-th approximation to $\Gamma,\T\b\phi\Seq\Delta,\T\b\psi,\T(\psi(\dot x))$ is derivable with rank $(a,r)$ then there is a derivation with rank $(a+1,r+H(k+1,n+1))$ of the $(k+1)$-th approximation to $\Gamma,\T\b\phi\Seq\Delta,\T\b\psi,\T(\dforall \gn x\psi)$.
\end{lemma}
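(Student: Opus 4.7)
\noindent\textbf{Proof proposal for Lemma~\ref{lem:CT*forall}.} The plan is to use the formalised version of Lemma~\ref{lem:KKLforall} as packaged in the fourth non-trivial sequent of Lemma~\ref{lem:Formal2}, in order to reinterpret the given derivation of the $k$-th approximation as a derivation whose truth-terms are already the $(k+1)$-th approximations of $\b\phi$, $\b\psi$ and the instantiated body of the quantifier. A single application of $(\forall_\T\mathrm R)$, followed by a cosmetic arithmetical cut, then delivers the $(k+1)$-th approximation of the target sequent.

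\medskip

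\noindent\textbf{Step 1 (fixing the translating term).} Set $w=\b\phi\concat\b\psi\concat(\psi(\dot x))$ and $x^{*}=\b\phi\concat\b\psi\concat(\dforall\gn{x}\psi)$, so $\psi(\dot x)=\subn(\psi,\gn{x},\dot x)$ exhibits $w$ as the instance required by the hypothesis of the fourth sequent of Lemma~\ref{lem:Formal2}, with $y=\bar k$, $z=\overline{k+1}$, $x_{0}=\gn{x}$, $x_{1}=\psi$, $u=\dot x$. Hence there is a term $g$ (truth-free derivably) with
\[
    \subdot F_{w,\bar k}(w)[g]\c=\subdot F_{x^{*},\overline{k+1}}(\b\phi\concat\b\psi)\concat\subn\bigl(\subdot F_{x^{*},\overline{k+1}}(\psi),\gn{x},\dot x\bigr),
\]
and the first sequent of Lemma~\ref{lem:Formal2} gives $\subdot d(g)\le \overline{(n+1)\cdot 2^{k+1}}=\overline{H(k+1,n+1)}$, this bound being derivable with truth rank $(0,0)$.

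\medskip

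\noindent\textbf{Step 2 (substituting along $g$).} Apply Lemma~\ref{lem:Approx2} to the given derivation of the $k$-th approximation of $\Gamma,\T\b\phi\Seq\Delta,\T\b\psi,\T(\psi(\dot x))$ with the term $g$ and the bound $\overline{H(k+1,n+1)}$. Componentwise rewriting via the identity above converts the resulting sequent into
\[
    \Gamma,\T\bigl(\subdot F_{x^{*},\overline{k+1}}(\b\phi)\bigr)\Seq\Delta,\T\bigl(\subdot F_{x^{*},\overline{k+1}}(\b\psi)\bigr),\T\bigl(\subdot F_{x^{*},\overline{k+1}}(\psi)[\dot x/\gn{x}]\bigr),
\]
with rank $(a,r+H(k+1,n+1))$. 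Here the rewriting of terms on either side of the sequent uses only arithmetical equalities and so does not affect the truth rank.

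\medskip

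\noindent\textbf{Step 3 ($(\forall_\T\mathrm R)$ and commutation).} Since none of $\b\phi$, $\b\psi$, $\psi$ contains $x$ free and $x^{*}$ is built from these, the variable $x$ is free in the displayed sequent only through the term $\dot x$ inside $\subdot F_{x^{*},\overline{k+1}}(\psi)[\dot x/\gn{x}]$. The eigenvariable condition for $(\forall_\T\mathrm R)$ is therefore met, and applying the rule with eigenvariable $x$ and principal part $\dforall\gn{x}\,\subdot F_{x^{*},\overline{k+1}}(\psi)$ yields, modulo an arithmetical Cut$_\L$ with the instance
\[
    \subdot F_{x^{*},\overline{k+1}}(\dforall\gn{x}\psi)\c=\dforall\gn{x}\,\subdot F_{x^{*},\overline{k+1}}(\psi)
\]
provided by the third sequent of Lemma~\ref{lem:Formal1} (and its hypothesis $(x^{*})_{i}\c=\dforall\gn{x}\psi$ holds by construction of $x^{*}$), exactly the $(k+1)$-th approximation of $\Gamma,\T\b\phi\Seq\Delta,\T\b\psi,\T(\dforall\gn{x}\psi)$. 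The single use of $(\forall_\T\mathrm R)$ raises the truth depth to $a+1$; the surrounding Cut$_\L$ leaves the truth rank unchanged, giving the claimed rank $(a+1,r+H(k+1,n+1))$.

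\medskip

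\noindent\textbf{Main obstacle.} The substantive work is all concentrated in Step~1, namely verifying that the syntactic form of the hypothesis of the fourth sequent of Lemma~\ref{lem:Formal2} is genuinely matched by the data of the lemma: the sequence $w$ must be the result of substitution in the distinguished last component of $x^{*}$, and the resulting identity on $\subdot F_{w,\bar k}(w)[g]$ must align, term by term, with the truth-terms appearing in the given derivation. Once this bookkeeping is carried out, Step~2 is an appeal to Lemma~\ref{lem:Approx2} and Step~3 is a single application of $(\forall_\T\mathrm R)$, so the only book-keeping hazard is accounting correctly for the bound $H(k+1,n+1)$ on the rank increment.
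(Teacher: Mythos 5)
Your proposal is correct and follows essentially the same route as the paper: instantiate the term $g$ from lemma~\ref{lem:Formal2} (with the depth bound $H(k+1,n+1)$ coming from its first sequent), push it through the given derivation via lemma~\ref{lem:Approx2}, rewrite the truth-terms using the Formal1/Formal2 identities with arithmetical cuts, and finish with one application of ($\forall_\T$R), giving rank $(a+1,r+H(k+1,n+1))$. The only differences are cosmetic (explicit mention of the eigenvariable condition and of the componentwise bookkeeping, which the paper leaves implicit).
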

\begin{proof}
    Let $\b\chi=\b\phi\concat\b\psi\concat(\psi(\dot x))$. Then assumption of the lemma is that the sequent
    \[
        \Gamma,\T(\subdot F_{\b \chi,\bar k}\b\phi)
            \Seq\Delta,\T(\subdot F_{\b \chi,\bar k}\b\psi),
                \T(\subdot F_{\b\chi,\bar k}(\psi(\dot x)))
    \]
    is derivable with rank $(a,r)$.
    Let $g(x,y,z)$ be the term given by lemma~\ref{lem:Formal2} and $g'=g(\b\chi,\bar k,\bar k+1)$. 
    Lemma~\ref{lem:Approx2} implies there is a derivation with rank $(a,r+H(k+1,n+1))$ of the sequent
    \[
        \Gamma,\T(\subdot F_{\b \chi',\bar k+1}\b\phi)
            \Seq\Delta,\T(\subdot F_{\b \chi',\bar k+1}\b\psi),
                \T(\subdot F_{\b\chi,\bar k}(\psi(\dot x))[g'])
    \]
    where $\b\chi'=\b\phi\concat\b\psi\concat\forall x\psi$.
    Combining this derivation with those of lemmata~\ref{lem:Formal1} and~\ref{lem:Formal2} and using only arithmetical cuts, yields a derivation of the sequent
    \[
        \Gamma,\T(\subdot F_{\b \chi',\bar k+1}\b\phi)
            \Seq\Delta,\T(\subdot F_{\b \chi',\bar k+1}\b\psi),
                \T(\subdot F_{\b\chi',\bar k+1}(\psi)(\dot x))
    \]
    with rank $(a,r+H(k+1,n+1))$, whence ($\forall_\T$R) and lemma~\ref{lem:Formal1} yield that
    \[
        \Seq\Delta,\T(\subdot F_{\b \chi',\bar k+1}\b\psi),
                \T(\subdot F_{\b\chi',\bar k+1}(\forall \gn x\psi))
    \]
    is derivable with rank $(a+1,r+H(k+1,n+1))$.
\end{proof}

The same holds for the other scenarios:
\begin{lemma}\label{lem:CT*vee}
    If the $k$-th approximation to $\Gamma,\T\b\phi\Seq\Delta,\T\b\psi,\T\psi_i$ is derivable with rank $(a,r)$ then the $(k+1)$-th approximation of $\Gamma,\T\b\phi\Seq\Delta,\T\b\psi,\T(\psi_0\dvee\psi_1)$ is derivable with rank $(a+1,r+H(k+1,n))$, where $n=lh(\b\phi)+lh(\b\psi)+1$.
\end{lemma}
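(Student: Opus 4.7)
The plan is to mimic the proof of Lemma~\ref{lem:CT*forall}, using the disjunctive clause of Lemma~\ref{lem:Formal2} in place of the universal clause and concluding with $(\vee_\T\text{R})$ in place of $(\forall_\T\text{R})$. Setting $\b\chi=\b\phi\concat\b\psi\concat\psi_i$ and $\b\chi'=\b\phi\concat\b\psi\concat(\psi_0\dvee\psi_1)$, both of length $n$, the hypothesis unfolds to a derivation of rank $(a,r)$ of
\[
  \Gamma,\T(\subdot F_{\b\chi,\bar k}\b\phi)\Seq\Delta,\T(\subdot F_{\b\chi,\bar k}\b\psi),\T(\subdot F_{\b\chi,\bar k}(\psi_i)).
\]

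Next I would instantiate the term $g$ provided by Lemma~\ref{lem:Formal2} to $g'=g(\gn{\b\chi},\gn{\b\chi'},\bar k,\bar k+1)$. The first sequent of that lemma gives the bound $\subdot d(g')\le n\cdot 2^{k+1}=H(k+1,n)$, while the third sequent (the $\dvee$-clause, with the $x_i$ there corresponding to our $\psi_i$) yields, for every component $u$ of $\b\chi$, an arithmetical derivation of $\subdot F_{\b\chi,\bar k}(u)[g']\c=\subdot F_{\b\chi',\bar k+1}(u)$. Applying Lemma~\ref{lem:Approx2} to the hypothesis with this $g'$, and then rewriting through arithmetical cuts using these equations, produces a derivation of rank $(a,r+H(k+1,n))$ of
\[
  \Gamma,\T(\subdot F_{\b\chi',\bar k+1}\b\phi)\Seq\Delta,\T(\subdot F_{\b\chi',\bar k+1}\b\psi),\T(\subdot F_{\b\chi',\bar k+1}(\psi_i)).
\]

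The first sequent of Lemma~\ref{lem:Formal1} then supplies the arithmetical equation $\subdot F_{\b\chi',\bar k+1}(\psi_0\dvee\psi_1)\c=\subdot F_{\b\chi',\bar k+1}(\psi_0)\dvee\subdot F_{\b\chi',\bar k+1}(\psi_1)$. Weakening the succedent to include $\T(\subdot F_{\b\chi',\bar k+1}(\psi_{1-i}))$ and then applying $(\vee_\T\text{R})$, gated by the above equation, yields the $(k+1)$-th approximation of $\Gamma,\T\b\phi\Seq\Delta,\T\b\psi,\T(\psi_0\dvee\psi_1)$ with rank $(a+1,r+H(k+1,n))$. The one point that differs from the universal case is that $(\vee_\T\text{R})$ demands both $\T\psi_0'$ and $\T\psi_1'$ in the succedent whereas only $\T\psi_i'$ is supplied by the hypothesis; this is the only step that appeals to the admissibility of weakening, and beyond it the argument is an essentially mechanical adaptation of Lemma~\ref{lem:CT*forall}, with no substantive new obstacle.
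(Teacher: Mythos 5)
Your proof is correct and is precisely the adaptation of Lemma~\ref{lem:CT*forall} that the paper intends here (it omits the argument, introducing the lemma only with ``the same holds for the other scenarios''): the same instantiation of $g$ from Lemma~\ref{lem:Formal2}, the same appeal to Lemma~\ref{lem:Approx2} with the bound $H(k+1,n)$, and the concluding ($\vee_\T$R) with an innocuous weakening. One small citation slip: the commutation fact $\subdot F_{\b\chi',\bar k+1}(\psi_0\dvee\psi_1)\c=\subdot F_{\b\chi',\bar k+1}(\psi_0)\dvee\subdot F_{\b\chi',\bar k+1}(\psi_1)$ is the fifth sequent of Lemma~\ref{lem:Formal1} (subject to its side condition, which your $\b\chi'$ satisfies), not the first, which concerns the substitution operation rather than $\subdot F$.
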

\begin{lemma}\label{lem:CT*Cut}
    Let $n=lh(\b\phi)+lh(\b\psi)$ and suppose $r\le H(k,n+1)$.
    If the $k$-th approximation to the sequents $\Gamma,\T\b\phi\Seq\Delta,\T\b\psi,\T\chi$ and $\Gamma,\T\b\phi,\T\chi\Seq\Delta,\T\b\psi$ are derivable with rank $(a,r)$ then the $H(k,n+1)$-th approximation of $\Gamma,\T\b\phi\Seq\Delta,\T\b\psi$ is derivable with rank 
    \[
        (a+1,H(k,n+1)+H(H(k,n+1),n))).
    \]
\end{lemma}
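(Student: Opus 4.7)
The plan is to re-present both hypothesis derivations so that they feature a single shared cut formula of controlled depth, and then fire (Cut$^m_\T$) once to close. The two hypothesis sequents, as stated, use slightly different reference sequences in their $k$-th approximations because $\T\chi$ appears on opposite sides; the permutation identity in lemma~\ref{lem:Formal1} allows both to be re-presented with respect to the common reference $\b\chi:=\b\phi\concat\b\psi\concat\chi$ (of length $n+1$), after which both derivations feature the same cut term $\T(\subdot F_{\b\chi,\bar k}(\chi))$ and leading approximations $\subdot F_{\b\chi,\bar k}\b\phi$, $\subdot F_{\b\chi,\bar k}\b\psi$, still at rank $(a,r)$.

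Next, I would invoke the last clause of lemma~\ref{lem:Formal2} with $w=\b\chi$, $x=\b\chi':=\b\phi\concat\b\psi$, $w'=\chi$, $y=\bar k$ and $z=\overline{H(k,n+1)}$. The antecedent $\forall u(\subdot d(\subdot F_{\b\chi,\bar k}(u))\le\overline{H(k,n+1)})$ is exactly the uniform depth estimate from the final sequent of lemma~\ref{lem:Formal1}, so the clause delivers a term $g$ with $\subdot d(g)\le\overline{H(H(k,n+1),n)}$ satisfying
\[
    \subdot F_{\b\chi,\bar k}(u)[g] \c= \subdot F_{\b\chi',\overline{H(k,n+1)}}(u)
\]
for every $u$ appearing in $\b\chi'$. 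Applying lemma~\ref{lem:Approx2} to each hypothesis derivation along this $g$ keeps the truth depth at $a$, raises the truth rank from $r$ to $r+H(H(k,n+1),n)$, transforms the approximations of $\b\phi$ and $\b\psi$ precisely into the $\b\chi'$-indexed ones that appear in the conclusion, and leaves the cut term on both sides as the single shared expression $\subdot F_{\b\chi,\bar k}(\chi)[g]$.

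Finally I would close with one application of (Cut$^m_\T$). The cut formula has depth bounded by $H(k,n+1)+H(H(k,n+1),n)$---the first summand by lemma~\ref{lem:Formal1}'s depth estimate on the $k$-th approximation under a length-$(n+1)$ reference, the second from $\subdot d(g)$---and the truth-free premise of the cut rule is produced from those same estimates inside $\IDexp$. Using the hypothesis $r\le H(k,n+1)$ to verify that the premise truth rank $r+H(H(k,n+1),n)$ stays within the target, the resulting derivation has rank $(a+1,\,H(k,n+1)+H(H(k,n+1),n))$, as required. The principal obstacle is the initial alignment: both hypothesis approximations must be forced onto the common reference $\b\chi$ so that one substitution $g$ simultaneously re-indexes all truth terms and yields a literally identical cut formula on both sides, and $g$ must be calibrated so that $\subdot d(g)$ is pinned exactly at $H(H(k,n+1),n)$---the delicate quantitative feature that the target rank of the lemma reflects.
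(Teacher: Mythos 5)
Your proposal is correct and takes essentially the same route as the paper: both arguments rest on the depth estimates of lemma~\ref{lem:Formal1}, the re-indexing term $g$ supplied by the final clause of lemma~\ref{lem:Formal2}, and lemma~\ref{lem:Approx2}, and both arrive at the rank $H(k,n+1)+H(H(k,n+1),n)$. The only difference is the order of operations — the paper first applies (Cut$^{H(k,n+1)}_\T$) to the $k$-th approximations and then re-indexes the conclusion to the $H(k,n+1)$-th approximation, whereas you re-index both premises first and then cut once at the larger index — a harmless permutation of the same steps.
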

\begin{proof} 
    Let $N=H(k,n+1)$, $\b\omega=\b\phi\concat\b\psi$, $\b\omega'=\b\omega\concat\chi$. By lemma~\ref{lem:Formal1} there is a truth-free derivation of $\emptyset\Seq\subdot d(\subdot F_{\b\omega',\bar k}(x))\le \bar N$, so the sequent 
    \[
        \Gamma,\T(\subdot F_{\b\omega',\bar k}\b\phi)\Seq\Delta,\T(\subdot F_{\b\omega',\bar k}\b\psi)
    \]
    has a derivation with rank $(a+1,\max\{r,N\})$. 
    Let $g$ be given by lemma~\ref{lem:Formal2} and set $g'=g(\b\omega',\b\omega,\bar k,\bar N)$. Thus, lemma~\ref{lem:Formal2} entails 
    \[
        \emptyset\Seq 
            (\subdot F_{\b\omega',\bar k}\b\omega)[g']\c=
                \subdot F_{\b\omega,\bar N}\b\omega
    \]
    is truth-free derivable, whence we apply lemma~\ref{lem:Approx2} to obtain a derivation with rank $(a+1,\max\{r,N\}+H(N,n))$ of the sequent
    \[
        \Gamma,\T(\subdot F_{\b\omega,\bar N}\b\phi)\Seq\Delta,\T(\subdot F_{\b\omega,\bar N}\b\psi).
    \]
\end{proof}
\section{Proofs of the main theorems}\label{sec:main}
We now have all the ingredients for the {bounding lemma}, that permits the interpretation of derivations in $\CT[\theory S]$ as derivations in $\CT^*[\theory S]$. 
The next lemma generalises the statement of lemma~\ref{lem:BoundingIntro} by incorporating the relevant bounds.
\begin{lemma}[Bounding lemma]\label{lem:Bounding}
    There are recursive functions $G_1$ and $G_2$ such that for every $a,n<\omega$, if $lh(\b\phi)+lh(\b\psi)\le n$ and the sequent $\Gamma,\T\b\phi\Seq\Delta,\T\b\psi$ is derivable in $\CT[\theory S]$ with truth depth $a$, then its $G_1(a,n)$-th approximation is derivable in $\CT^*[\theory S]$ with rank $(a,G_2(a,n))$.
\end{lemma}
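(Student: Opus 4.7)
The plan is to proceed by induction on the truth depth $a$ of the given $\CT[\theory S]$ derivation of $\Gamma,\T\b\phi\Seq\Delta,\T\b\psi$, with the functions $G_1(a,n)$ and $G_2(a,n)$ defined by a simultaneous recursion engineered so that the hypotheses of lemmata~\ref{lem:CT*forall}, \ref{lem:CT*vee} and \ref{lem:CT*Cut} hold at each inductive step. The transformations of section~\ref{sec:ApproxDeriv}, together with lemma~\ref{lem:Approx2} for promoting one approximation to a finer one, supply every syntactic move the argument requires; all the real work lies in threading the bounds.

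For the base case $a=0$, the derivation uses no truth rule, and the approximated sequent is derivable in $\CT^*[\theory S]$ at any level $k$. For axioms of type~(2) the premise $r\c=s$ arithmetically entails $\subdot F_{\b s\concat\b t,\bar k}(r)\c=\subdot F_{\b s\concat\b t,\bar k}(s)$, so after a single arithmetical cut the approximated sequent is an instance of the same axiom. Axioms of type~(3) are handled using the $\IDexp$-provable fact that $\subdot F_{\cdots,\bar k}$ preserves sentencehood. Arithmetical rules and (Cut$_\L$) commute trivially with approximation, since neither touches the truth-free formulas in $\Gamma,\Delta$, so $G_1(0,n)=0$ and $G_2(0,n)=0$ suffice.

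In the inductive step with truth depth $a+1$ the last rule is either a truth rule or a $\T$-cut. For a truth introduction such as ($\forall_\T$R), the induction hypothesis is applied to the premise at parameter $(a,n+1)$, yielding a derivation of its $G_1(a,n+1)$-th approximation with rank $(a,G_2(a,n+1))$; lemma~\ref{lem:CT*forall} then produces the $(G_1(a,n+1)+1)$-th approximation of the conclusion with controlled rank increment $H(G_1(a,n+1)+1,n+1)$. The remaining truth rules are handled by lemma~\ref{lem:CT*vee} and analogous statements for the negation and equality rules, with matching rank increments. For (Cut$_\T$) on some formula $\T\chi$, the induction hypothesis on each premise at parameter $(a,n+1)$ produces derivations of the same $G_1(a,n+1)$-th approximation with rank $(a,G_2(a,n+1))$, and lemma~\ref{lem:CT*Cut} combines them into a derivation of the $H(G_1(a,n+1),n+1)$-th approximation of the conclusion, provided the rank constraint $G_2(a,n+1)\le H(G_1(a,n+1),n+1)$ has been secured. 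This dictates the recursion $G_1(a+1,n)=H(G_1(a,n+1),n+1)$, with $G_2(a+1,n)$ chosen to dominate the rank in the conclusion of lemma~\ref{lem:CT*Cut} while remaining bounded by $H(G_1(a+1,n),n+1)$ so that the induction can continue.

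The principal obstacle is the synchronisation of approximation levels. The induction hypothesis applied to distinct sub-derivations produces derivations of \emph{their own} approximations, whereas both lemma~\ref{lem:CT*Cut} and the truth-rule lemmas require all relevant sequents to live at a common level. Lemma~\ref{lem:1} guarantees that the $j$-th approximation is itself an approximation of the $i$-th whenever $i\le j$, so one can always promote a lower-level derivation, and the cost of that promotion is quantified by lemma~\ref{lem:Approx2}; this cost is what forces the mild exponential growth encoded in $G_2$. Verifying that the recursively defined schedule $(G_1,G_2)$ satisfies the rank constraint $G_2(a,n+1)\le H(G_1(a,n+1),n+1)$ uniformly in $a$ and $n$ is the single non-trivial arithmetical check in the proof, discharged simultaneously with the main induction.
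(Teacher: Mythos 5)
Your proposal is correct and follows essentially the same route as the paper: induction on the truth depth, with $G_1(a+1,n)=H(G_1(a,n+1),n+1)$ extracted from lemmata~\ref{lem:CT*forall}, \ref{lem:CT*vee} and \ref{lem:CT*Cut}, and level-promotion via lemmata~\ref{lem:1} and \ref{lem:Approx2}. The only difference is cosmetic: the paper fixes $G_2$ explicitly as $G_2(m,n)=G_1(m+1,m+n)$ rather than leaving it as a schedule constrained by the rank hypothesis of lemma~\ref{lem:CT*Cut}, which is the same bookkeeping you describe.
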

\begin{proof}
The idea is to copy the $\CT[\theory S]$ derivation into $\CT^*[\theory S]$ replacing the rule (Cut$_\T$) by (Cut$_\T^k$) for $k$ determined inductively. 
The functions $G_1$ and $G_2$ are defined according to the bounds obtained in the previous section:
\begin{align*}
    G_1(0,n)
        &=0,    \\
    G_1(m+1,n)
        &=H(G_1(m,n+1),n+1),\\
    G_2(m,n)
        &=G_1(m+1,m+n).
\end{align*}
We argue by induction on $a$. Suppose the last rule applied to obtain $\Gamma,\T\b\phi\Seq\Delta,\T\b\psi$ is a non-arithmetical cut on $\T\chi$ and that this derivation has height $a+1$. Let $\b\omega=\b\phi\concat\b\psi$ and $\b\omega'=\b\omega\concat\chi$.
The induction hypothesis implies 
that the $G_1(a,n+1)$-th approximations to
\begin{align*}
    \Gamma,\T\b\phi,\T\chi\Seq\Delta,\T\b\psi &&
    \Gamma,\T\b\phi\Seq\Delta,\T\chi,\T\b\psi
\end{align*}
are each derivable in $\CT^*[\theory S]$ with rank $(a,G_2(a,n+1))$.
By lemma~\ref{lem:CT*Cut} there is a derivation with height $a+1$ of the $G_1(a+1,n)$-th approximation to $\Gamma,\T\b\phi\Seq\Delta,\T\b\psi$. This derivation has cut rank bounded by $G_2(a+1,n)$
so we are done. The other cases are similar and follow from applications of lemmas~\ref{lem:CT*forall} and \ref{lem:CT*vee}.
\end{proof}

A combination of lemmas~\ref{lem:Approx} and \ref{lem:Bounding} implies that $\CT[\theory S]$ permits the elimination of all $\T$-cuts.
\begin{corollary}\label{cor:cutelimination}
    If $\Gamma \Seq \Delta$ is derivable in $\CT[\theory S]$ then it is derivable without $\T$-cuts.
\end{corollary}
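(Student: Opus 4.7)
The plan is to chain three results already established in the paper: the Bounding lemma~\ref{lem:Bounding}, the Cut elimination theorem~\ref{thm:2ndCutElim} for $\CT^*[\theory{S}]$, and the substitution principle of lemma~\ref{lem:Approx}. Starting from a $\CT[\theory{S}]$-derivation of $\Gamma\Seq\Delta$ of truth depth $a$, I would first separate the top-level occurrences of the truth predicate, writing the endsequent as $\Gamma_0,\T\b\phi\Seq\Delta_0,\T\b\psi$ with $\Gamma_0,\Delta_0$ arithmetical, and set $n=lh(\b\phi)+lh(\b\psi)$. Compound $\L_\T$-formulas occurring in $\Gamma$ or $\Delta$ cause no extra difficulty, since the approximation construction of section~\ref{sec:Approximations} commutes with the arithmetical connectives.

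The first active step is to invoke the Bounding lemma, which produces a derivation in $\CT^*[\theory{S}]$ of the $G_1(a,n)$-th approximation of the endsequent with truth rank at most $(a,G_2(a,n))$. The second step is to iterate the Cut elimination theorem $G_2(a,n)$ times, each iteration exchanging one unit of truth rank for a tripling of the truth depth. The outcome is a $\CT^*[\theory{S}]$-derivation of the same approximation whose truth rank is zero; such a derivation contains no applications of (Cut$^k_\T$) and hence no $\T$-cuts at all. For the third step, note that by construction the $G_1(a,n)$-th approximation is an approximation of $(\b\phi,\b\psi)$, so there is a closed term $g_0$ coding the witnessing assignment. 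Applying lemma~\ref{lem:Approx} with this $g_0$ converts the $\T$-cut-free $\CT^*[\theory{S}]$-derivation of the approximation into a $\T$-cut-free derivation of $\Gamma\Seq\Delta$ in $\CT[\theory{S}]$, as required.

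The step I expect to demand most care is the final one: one must check that substitution by $g_0$ does not accidentally reintroduce $\T$-cuts and that the arithmetical cuts surviving the cut-elimination phase---which may involve auxiliary predicates $\P^i_j$ from $\L^+$ as codes under the truth predicate---descend to well-formed arithmetical cuts once those auxiliaries are instantiated to genuine $\L$-formulas by $g_0$. Lemma~\ref{lem:Approx} is designed precisely to handle both points, so this obstacle is essentially one of bookkeeping rather than mathematical substance.
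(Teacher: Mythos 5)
Your proposal follows essentially the paper's own route: the Bounding lemma~\ref{lem:Bounding} to pass into $\CT^*[\theory S]$, iterated use of the cut elimination theorem~\ref{thm:2ndCutElim} to drive the truth rank to zero, and then lemma~\ref{lem:Approx} with a term for the assignment witnessing lemma~\ref{lem:1} to recover the original sequent without reintroducing $\T$-cuts. Only a cosmetic slip: each application of theorem~\ref{thm:2ndCutElim} sends the truth depth $a$ to $3^{a}$ rather than tripling it, but since the corollary asserts no bounds this does not affect the argument.
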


\subsection{Proof of theorem~\ref{thm:1}}
Let $\phi$ be an arithmetical theorem of $\CT[\theory S]$. 
By the Embedding Lemma, the sequent $\emptyset\Seq\phi$ has a derivation within $\CT[\theory S]$.
Lemma~\ref{lem:Bounding} implies that the same sequent is derivable in $\CT^*[\theory S]$ and the cut elimination theorem for $\CT^*[\theory S]$ shows $\emptyset\Seq\phi$ is derivable without truth cuts.
But this derivation is also a derivation within $\theory S$. 
Notice that this final derivation has height bounded by $2^a_{2\cdot G_1(a+1,a+1)}$, where $a$ bounds the height of the original derivation of $\emptyset\Seq\phi$ in $\CT[\theory S]$, $G_1$ is as defined in the proof of the Bounding Lemma, and $2^n_m$ represents the function of hyper-exponentiation: $2^n_0=2^n$ and $2^n_{m+1}=2_m^{2^n}$.
Thus this reduction can be formalised within $\IDexp_1$.

\subsection{Proof of theorem~\ref{thm:2}}
Let $\theory S$ and $\pred D$ be as given in the statement of the theorem and let $U$ be the finite set of $\L\cup\{p\}$ formulæ associated with the $\theory S$-schema $\pred D$.
We will show that the Bounding lemma naturally extends to  provide a reduction of the theory $\CT[\theory S]+\forall x(\pred Dx\rightarrow \T x)$ into the extension of $\CT^*[\theory S]$ by the rule 
\begin{prooftree}
    \AxiomC{$\Gamma\Seq\Delta,\pred Ds$}
    \RightLabel{($\pred D$)}
    \UnaryInfC{$\Gamma\Seq\Delta,\pred \T s$}
\end{prooftree}
Despite the fact that all cuts in this latter theory remain bounded, unlike $\CT^*[\theory S]$ the theory will not in general support the cut elimination procedure.
Nevertheless, conservativity over $\theory S$ can be achieved by considering the additional assumptions.

Suppose $d$ is a derivation with truth depth $a$ of the truth-free sequent $\Gamma\Seq\Delta$ in the expansion of $\CT[\theory S]$ by the rule ($\pred D$).
By redefining the functions $G_1$ and $G_2$ so that $G_1(0,n)$ bounds the logical depth of the (finitely many) formulæ in $U$ for each $n$, the proof of the Bounding Lemma can be carried through to obtain a derivation with rank $(a,G_2(a,0))$ of the same sequent in the system expanding $\CT^*[\theory S]$
by a variant of ($\pred D$):
\begin{prooftree}
    \AxiomC{$\Pi,\T\b\phi\Seq\Sigma,\T\b\psi,\pred D\sigma$}
    \LeftLabel{($\pred D_{\b\omega}$)}
    \UnaryInfC{$\Pi,\T\b\phi\Seq\Sigma,\T\b\psi,\pred \T(\subdot F_{\b\omega,\bar k}\sigma)$}
\end{prooftree}
where $\Pi$ and $\Sigma$ are truth-free, $k=G_1(a,0)$ and $\b\omega=\b\phi\concat\b\psi\concat\sigma$.

Let $d^*$ denote this derivation. Fix $n$ such that for each instance of ($\pred D_{\b\omega}$) occurring in $d^*$, $\lh{\b\omega}<n$, and set $U^+$ to be the finite set of instantiations of formulæ from $U$ by $\L$-formulæ that have logical depth at most $G_2(a,n)$. 
It follows that the sequent $\pred Dx,\d d(x)<\overline{G_2(a,n)}\Seq \{x=\gn \phi\mid \phi\in U^+\}$ is derivable in $\theory S$. 
Because the sequent
    $\sigma\Seq \T\gn\sigma$    
is derivable in $\CT[\theory S]$ for each $\L$-sentence $\sigma$ we may deduce 
\[
  \pred Dx\Seq\T (\subdot F_{\gn {\b\omega},\bar k}x)  
\]
is derivable in $\CT[\theory S]$ whenever $\lh{\b\omega}<n$. 
Thus $d^*$ can be interpreted in $\CT[\theory S]$ and an application of theorem~\ref{thm:1} completes the proof.
\section{Conservativity, interpretability and speed-up}\label{sec:otherresults}

The following instance of theorem~\ref{thm:2} is particularly revealing:
\begin{corollary}\label{cor:Conserve}
    Let $\pred{Ind}_\L$ be the formula expressing that $x$ is the code of the universal closure of an instance of $\L$-induction.
    Then $\CT[\PA]+\forall x(\pred{Ind}_\L x\rightarrow\T x)$ conservatively extends $\theory{PA}$.
\end{corollary}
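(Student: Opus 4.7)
The corollary is a direct application of Theorem~\ref{thm:2} with $\theory{S} = \PA$ and $\pred D = \pred{Ind}_\L$; the entire work consists in verifying that $\pred{Ind}_\L$ meets the two conditions of being a $\PA$-schema, as defined immediately before Theorem~\ref{thm:2}.

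The first clause, that $\PA \vdash \pred{Ind}_\L(\gn\sigma) \rightarrow \sigma$ for every $\L$-formula $\sigma$, is routine: $\pred{Ind}_\L$ is primitive recursive and so on any specific numeral $\gn\sigma$ its truth value is settled in $\PA$ by $\Delta_0$-completeness. If $\sigma$ is genuinely the universal closure of an induction instance then $\sigma$ is an axiom of $\PA$, so $\PA\vdash\sigma$ and the implication holds; if not, $\PA$ refutes $\pred{Ind}_\L(\gn\sigma)$ and the implication is trivial.

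The second clause requires a finite set $U$ of $\L\cup\{p\}$-templates such that $\PA$ proves every $x$ satisfying $\pred{Ind}_\L(x)$ to equal $\gn{\phi[\psi/p]}$ for some $\phi\in U$ and some $\psi$. The canonical template is
\[
    p(0)\wedge\forall y(p(y)\rightarrow p(S(y)))\rightarrow\forall y\,p(y),
\]
prefaced by a fixed-length block of universal quantifiers intended to bind the free parameters of the substituted $\psi$. The apparent difficulty --- induction axioms prima facie range over formulæ with arbitrarily many parameters, whereas $U$ must be finite --- is the main technical hurdle, and is neutralised using $\PA$'s definable pairing: every induction instance $\forall\bar z\bigl[\psi(0,\bar z)\wedge\forall y(\psi(y,\bar z)\rightarrow \psi(S(y),\bar z))\rightarrow\forall y\,\psi(y,\bar z)\bigr]$ in $k$ parameters can be rewritten, provably in $\PA$, as an instance in a single coded parameter $z$ standing for $\bar z=((z)_1,\dots,(z)_k)$. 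Thus finitely many templates (in fact, a bounded family differing only in the length of the external quantifier prefix) cover every case.

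Once this syntactic reduction is carried out inside $\PA$ to yield the desired finite $U$ and witnessing substitution, Theorem~\ref{thm:2} applies and delivers the conservativity of $\CT[\PA]+\forall x(\pred{Ind}_\L x\rightarrow\T x)$ over $\PA$ with no further work.
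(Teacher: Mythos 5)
Your overall route is the same as the paper's: Corollary~\ref{cor:Conserve} is presented there simply as an instance of Theorem~\ref{thm:2}, so the whole content lies in checking that $\pred{Ind}_\L$ is a $\PA$-schema, and your verification of the first clause is fine. The gap is in the second clause. The definition of an $\theory S$-schema requires $\PA\vdash \pred{Ind}_\L x\rightarrow\exists\psi\,\bigvee_{\phi\in U}(x=\gn{\phi[\psi/p]})$, i.e.\ that any $x$ recognised by $\pred{Ind}_\L$ is \emph{literally} the code of a substitution instance of one of the finitely many templates. Your pairing manoeuvre only shows that an induction closure in $k$ parameters is \emph{provably equivalent} in $\PA$ to an instance with a single coded parameter; that rewritten sentence has a different code, so the required identity $x=\gn{\phi[\psi/p]}$ is not established. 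Nor can one quietly replace $\pred{Ind}_\L$ by the predicate recognising the re-coded normal forms: the axiom being added is $\forall x(\pred{Ind}_\L x\rightarrow\T x)$, quantifying over all (including nonstandard) codes of the original closures, and transferring $\T$ across a $\PA$-provable equivalence uniformly in $x$ needs resources (induction for formulæ containing $\T$, or closure of $\T$ under provable equivalence) that $\CT[\PA]$ precisely lacks. Relatedly, your parenthetical ``bounded family differing only in the length of the external quantifier prefix'' cannot stand: the number of parameters, hence the prefix length, is unbounded, which is exactly why prefix-style templates alone would force $U$ to be infinite.

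The repair is not to re-code parameters but to absorb the unbounded part of the closure into the substituted formula $\psi$. For instance, take $U$ to contain the parameter-free template $p(0)\wedge\forall y(p(y)\rightarrow p(S(y)))\rightarrow\forall y\,p(y)$ together with a template of the shape $\forall v\,p(v)$: a closure $\forall z_1\dots\forall z_k\,\theta$ is then literally the instance obtained by substituting $\psi:=\forall z_2\dots\forall z_k\,\theta$ for $p$ (with the convention that the canonical closure uses a fixed choice of outermost bound variable, or with the finitely many variable variants the coding requires added to $U$). All that the proof of Theorem~\ref{thm:2} uses of $U$ is its finiteness and a bound on the logical depth of its members, and this choice supplies both, whereas the $U$ you describe does not cover the codes that $\pred{Ind}_\L$ actually recognises.
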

Corollary~\ref{cor:Conserve} effectively shows the limit of what principles can be conservatively added to $\CT[\PA]$.
It is well known that extending $\CT[\theory {PA}]$ by induction for formulæ involving the truth predicate (even only for bounded formulæ) allows the deduction of the global reflection principle $\forall x(\Bew_\theory {PA} x\rightarrow \T x)$, and hence the schema of reflection $\Bew_\PA\gn\phi\rightarrow \phi$, a statement not provable in $\theory{PA}$.

An analogous result holds also for other first-order systems such as set theories. 
For example, the above corollary still holds if $\PA$ is replaced by Zermelo-Fraenkel set theory and $\pred{Ind}$ is replaced by a formula recognising instances of the separation and replacement axioms. 
Expanding the axiom schemata of $\CT[\theory{ZF}]$ to apply also to formulæ involving the truth predicate, however, yields a non-conservative extension.\footnote{Assuming $\theory{ZF}$ is consistent.}

We conclude the paper with two corollaries that are specific to a proof-theoretic treatment of $\CT[\theory S]$.
\begin{corollary}\label{cor:Speedup}
    $\CT[\theory S]$ attains at best hyper-exponential speed-up over $\theory S$.
\end{corollary}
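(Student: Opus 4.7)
The plan is simply to unwind the quantitative bounds already established in the proof of theorem~\ref{thm:1} and rephrase the resulting estimate as a statement about speed-up. Given a $\CT[\theory S]$-derivation of an arithmetical sentence $\phi$ of height $a$, I would first apply the Bounding Lemma (lemma~\ref{lem:Bounding}) to obtain a $\CT^*[\theory S]$-derivation of $\emptyset\Seq\phi$ with rank $(a,G_2(a,0))$, then iterate the cut elimination theorem (lemma~\ref{thm:2ndCutElim}) $G_2(a,0)$ many times to reduce the truth rank to zero. Since each pass sends height $b$ to $3^b$, the final truth-cut-free derivation has height bounded by $3$ iterated $G_2(a,0)$ times on $a$. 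As the conclusion is truth-free and no $\T$-cuts remain, this derivation consists only of arithmetical axioms and rules and is therefore a derivation in $\theory S$.

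Next I would verify, by induction on the defining recursions $H(k,n)=n\cdot 2^k$, $G_1(m+1,n)=H(G_1(m,n+1),n+1)$ and $G_2(m,n)=G_1(m+1,m+n)$, that both $G_1(a,n)$ and $G_2(a,n)$ are majorised by a tower of exponentials whose height is linear in $a$. Combining this estimate with the $3^{(\cdot)}$-tower obtained from iterating the cut reduction, the overall height of the resulting $\theory S$-derivation is bounded by $2^a_{c(a)}$ where $c$ is itself hyper-exponential in $a$; this is precisely the bound $2^a_{2\cdot G_1(a+1,a+1)}$ already recorded at the end of the proof of theorem~\ref{thm:1}. Rewriting this bound as a relation between the length of the $\CT[\theory S]$-proof and that of the corresponding $\theory S$-proof gives exactly the at-most hyper-exponential speed-up of the corollary.

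There is no real obstacle here, as the required estimate is essentially a by-product of the cut-elimination machinery developed in the preceding sections. The only care needed is to make precise the translation from ``height of derivation'' to ``size of proof'' (so that the hyper-exponential growth in height becomes a hyper-exponential growth in proof size) and to confirm that $G_1,G_2$ indeed lie in the hyper-exponential hierarchy; both amount to routine inductions on the definitions already present in section~\ref{sec:Approximations} and the proof of lemma~\ref{lem:Bounding}.
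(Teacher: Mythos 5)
Your proposal is correct and takes essentially the same route as the paper: the corollary is just the quantitative content of the proof of theorem~\ref{thm:1}, i.e.\ the bound $2^a_{2\cdot G_1(a+1,a+1)}$ coming from the Bounding Lemma (lemma~\ref{lem:Bounding}) together with iterated applications of lemma~\ref{thm:2ndCutElim}. The paper merely packages this as ``the conservativity proof is formalisable in $\IDexp_1$'' (whose provably total functions are hyper-exponentially bounded), whereas you unwind the same explicit estimate directly.
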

To restate Corollary~\ref{cor:Speedup}, every $\L$-theorem of $\CT[\theory S]$ is derivable in $\theory S$ with at most  hyper-exponential increase in the length of the derivation.
The upper-bound results from the fact the conservativeness of $\CT[\theory S]$ over $\theory S$ can be established within $\IDexp_1$.

Fischer, in~\cite{Fis09}, discusses a further consequence of a formalised conservativeness proof for $\CT$.
\begin{lemma}[Fischer~\cite{Fis09}]\label{lem:Fischer}
    If $\PA \vdash \forall x (\Sent_\L x \wedge \Bew_{\CT[\theory S_0]}x \rightarrow \Bew_{\theory S_0}x)$ for every $\theory{I\Sigma}_1 \subseteq \theory S_0\subseteq \PA$ then $\CT[\PA]$ is relatively interpretable in $\PA$.\footnote{We refer the reader to, e.g., \cite{Fis09} for a definition of {\em relatively interpretable}.}
\end{lemma}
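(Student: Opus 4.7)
The plan is to invoke the Orey--H\'ajek characterization of relative interpretability: for a recursively enumerable theory $T$ extending $\PA$, $T$ is relatively interpretable in $\PA$ if and only if, for every finitely axiomatized subtheory $T_0 \subseteq T$, $\PA \vdash \Con(T_0)$. Since $\CT[\PA]$ is recursively enumerable it will suffice to prove $\PA \vdash \Con(\CT_0)$ for every finite $\CT_0 \subseteq \CT[\PA]$.

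First I would reduce to finitely axiomatized base theories. Any finite $\CT_0 \subseteq \CT[\PA]$ invokes only finitely many induction axioms of $\PA$; collecting these together with $\theory{I\Sigma}_1$ produces a finitely axiomatized $\theory S_0$ with $\theory{I\Sigma}_1 \subseteq \theory S_0 \subseteq \PA$ and $\CT_0 \subseteq \CT[\theory S_0]$. Instantiating the hypothesis of the lemma at this $\theory S_0$ and the $\L$-sentence $\bot$ yields $\PA \vdash \Bew_{\CT[\theory S_0]}\gn\bot \rightarrow \Bew_{\theory S_0}\gn\bot$, which is equivalent to $\PA \vdash \Con(\theory S_0) \rightarrow \Con(\CT[\theory S_0])$.

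Next, because $\theory S_0$ is a finitely axiomatized extension of $\theory{I\Sigma}_1$ contained in $\PA$, all of its axioms lie within some fixed $\Sigma_n$ complexity class; $\PA$ then proves the soundness of $\theory S_0$ via the partial truth predicate for $\Sigma_n$ formulae, and hence $\PA \vdash \Con(\theory S_0)$. Chaining the two implications delivers $\PA \vdash \Con(\CT[\theory S_0])$, hence $\PA \vdash \Con(\CT_0)$, and the Orey--H\'ajek theorem then yields the desired interpretation of $\CT[\PA]$ inside $\PA$.

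The principal obstacle is the appeal to Orey--H\'ajek, whose formalization involves encoding interpretations within arithmetic; for the present application, however, this can be taken as a black box. A secondary subtlety is the passage from a finite $\CT_0 \subseteq \CT[\PA]$ to a finitely axiomatized arithmetical base $\theory S_0$: one must verify that the compositional truth axioms together with the finitely many instances of $\PA$-induction occurring in $\CT_0$ can be uniformly absorbed into some $\CT[\theory S_0]$ for which the hypothesis applies. Since the language $\L$ of $\theory S = \PA$ is fixed throughout, the compositional scheme contributes only finitely many axioms, and this step is routine. The real content of the lemma therefore sits entirely in its hypothesis, which the paper's main theorem---by formalizing conservativity within $\IDexp_1 \subseteq \PA$---supplies in abundance.
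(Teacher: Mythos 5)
Your argument is correct and is essentially the proof of the cited source: the paper itself does not prove this lemma but defers to Fischer~\cite{Fis09}, whose argument is exactly the Orey--H\'ajek route you reconstruct (essential reflexivity of $\PA$ gives $\PA\vdash\Con(\theory S_0)$ for finitely axiomatised $\theory{I\Sigma}_1\subseteq\theory S_0\subseteq\PA$, the formalised conservativity hypothesis instantiated at an inconsistent $\L$-sentence upgrades this to $\PA\vdash\Con(\CT[\theory S_0])$, and the Orey--H\'ajek characterisation of interpretability in reflexive theories then yields an interpretation of the r.e.\ theory $\CT[\PA]$ in $\PA$). Your reduction of an arbitrary finite fragment of $\CT[\PA]$ to some $\CT[\theory S_0]$ is sound, since the compositional axioms are finite in number once $\L$ is fixed.
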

Combining this with theorem~\ref{thm:1} therefore yields
\begin{corollary}\label{cor:Fischer}
    If $\theory S\subseteq \PA$ then $\CT[\theory S]$ is relatively interpretable in $\PA$.
\end{corollary}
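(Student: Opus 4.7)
The plan is to combine Theorem~\ref{thm:1} with Fischer's Lemma (\ref{lem:Fischer}) almost mechanically: Fischer's lemma reduces relative interpretability of $\CT[\PA]$ in $\PA$ to a uniform \emph{formalised} conservativity statement, and Theorem~\ref{thm:1} delivers exactly such a statement, with the added bonus that the formalisation lives in $\IDexp_1\subseteq\PA$. The corollary about arbitrary $\theory S\subseteq\PA$ then follows because $\CT[\theory S]$ is a sub-theory of $\CT[\PA]$, so relative interpretability is inherited.

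First I would verify the hypothesis of Lemma~\ref{lem:Fischer}. Fix any theory $\theory S_0$ with $\theory{I\Sigma}_1\subseteq\theory S_0\subseteq\PA$; such an $\theory S_0$ is elementarily axiomatised (the usual arithmetical fragments are) and interprets $\IDexp$ by virtue of containing $\theory{I\Sigma}_1$, so Theorem~\ref{thm:1} applies and yields, provably in $\IDexp_1$, the implication $\forall x(\Sent_\L x\wedge \Bew_{\CT[\theory S_0]}x\rightarrow \Bew_{\theory S_0}x)$. Since $\IDexp_1$ is a fragment of $\PA$, the same sentence is provable in $\PA$, and this is exactly the hypothesis of Fischer's lemma. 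Applying the lemma gives relative interpretability of $\CT[\PA]$ in $\PA$.

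Finally, given an arbitrary $\theory S\subseteq\PA$, every axiom of $\CT[\theory S]$ is either an axiom of $\theory S$ (hence of $\PA$) or a compositional truth axiom, which is also an axiom of $\CT[\PA]$; thus $\CT[\theory S]\subseteq\CT[\PA]$ as deductive systems over the common language $\L_\T$. A relative interpretation of $\CT[\PA]$ into $\PA$ is therefore also a relative interpretation of $\CT[\theory S]$ into $\PA$, which yields the corollary.

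The main obstacle, such as it is, lies in confirming that Theorem~\ref{thm:1} genuinely gives what Fischer's lemma demands: namely a single $\PA$-provable universally quantified formalised conservativity statement, uniform in the sentence $x$, rather than merely a scheme of metatheoretic conservativity facts. This is secured by the ``moreover'' clause of Theorem~\ref{thm:1}, whose proof (via the Bounding Lemma and the hyper-exponential bound noted in §\ref{sec:main}) is carried out within $\IDexp_1$ and therefore applies uniformly inside $\PA$; once this point is checked, no further work is required.
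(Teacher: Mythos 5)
Your proposal is correct and follows essentially the paper's own route: verify the hypothesis of Fischer's lemma (lemma~\ref{lem:Fischer}) using the ``moreover'' clause of theorem~\ref{thm:1} (formalised conservativity in $\IDexp_1\subseteq\PA$), conclude that $\CT[\PA]$ is relatively interpretable in $\PA$, and pass to arbitrary $\theory S\subseteq\PA$ via $\CT[\theory S]\subseteq\CT[\PA]$. The only difference is that you make explicit the final sub-theory step, which the paper leaves implicit.
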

\section*{Acknowledgements}
I would like to thank Albert Visser, Ali Enayat, Kentaro Fujimoto and Volker Halbach for their helpful comments on earlier versions of this paper.
This work was supported by the Arts and Humanities Research Council UK grant no.\ AH/H039791/1.

\end{document}